\documentclass[12pt]{amsart}
\usepackage{hyperref}

\textwidth=155mm
\textheight=200mm
\topmargin=20mm
\hoffset=-17.5mm

\newtheorem{theorem}{Theorem}[section]
\newtheorem{definition}[theorem]{Definition}

\newtheorem{proposition}[theorem]{Proposition}
\newtheorem{corollary}[theorem]{Corollary}

\theoremstyle{remark}
\newtheorem{remark}[theorem]{Remark}

\newcommand\NN{{\mathbb N}}
\newcommand\TT{{\mathbb T}}
\newcommand\ZZ{{\mathbb Z}}
\newcommand\Tr{{\rm Tr}}
\newcommand\ee{{\epsilon}}
\newcommand\bee{\boldmath{\epsilon}}
\newcommand\ii{\mathbf{i}}
\newcommand\Cp{C}
\newcommand\ff{\varphi}

\begin{document}

\title{Stochastic aspects of easy quantum groups}

\author{Teodor Banica}
\address{T.B.: Department of Mathematics, Cergy-Pontoise
University, 2 avenue Chauvin, 95302 Cergy-Pontoise, France. {\tt teodor.banica@u-cergy.fr}}

\author{Stephen Curran}
\address{S.C.: Department of Mathematics, University of California,
Berkeley, CA 94720, USA. {\tt curransr@math.berkeley.edu}}

\author{Roland Speicher}
\address{R.S.: Department of Mathematics and Statistics,
Queen's University, Jeffery Hall, Kingston, Ontario K7L 3N6,
Canada. {\tt speicher@mast.queensu.ca}}

\subjclass[2000]{60B15 (16T30, 46L54)} \keywords{random matrices, quantum group,
noncrossing partition}

\begin{abstract}
We consider several orthogonal quantum groups satisfying the ``easiness'' assumption
axiomatized in our previous paper. For each of them we discuss the computation of the
asymptotic law of ${\rm Tr}(u^k)$ with respect to the Haar measure, $u$ being the
fundamental representation. For the classical groups $O_n,S_n$ we recover in this way
some well-known results of Diaconis and Shahshahani.
\end{abstract}

\maketitle

\section*{Introduction}

The present paper is a continuation of our previous work \cite{bsp} on easy quantum
groups. We will present a concrete application of our formalism: a unified approach plus
quantum group extension of some results of Diaconis-Shahshahani \cite{dsh}.

The objects of interest will be the compact quantum groups satisfying $S_n\subset
G\subset O_n^+$. Here $O_n^+$ is the free analogue of the orthogonal group, constructed
by Wang in \cite{wa1}, and for the compact quantum groups we use Woronowicz's formalism
in \cite{wo1}.

As in \cite{bsp}, we restrict attention to the ``easy'' case. The easiness assumption,
essential to our considerations, roughly states that the tensor category of $G$ should be
spanned by certain partitions, coming from the tensor category of $S_n$. This might look
like a quite technical condition, but in our opinion this provides a good framework for
understanding certain probabilistic and representation theory aspects of orthogonal
quantum groups.

There are 14 natural examples of easy quantum groups found in \cite{bsp}. The list is as
follows:
\begin{enumerate}
\item Groups: $O_n,S_n,H_n,B_n,S_n',B_n'$.

\item Free versions: $O_n^+,S_n^+,H_n^+,B_n^+,S_n'^+,B_n'^+$.

\item Half-liberations: $O_n^*,H_n^*$.
\end{enumerate}

The 4 ``primed'' versions above are rather trivial modifications of their ``unprimed''
versions, corresponding to taking a product with a copy of $\mathbb{Z}_2$. We will focus
then on the remaining 10 examples in this paper. In addition to the 14 examples listed
above, there are two infinite series $H_n^{(s)}$ and $H_n^{[s]}$, described in
\cite{bcs1}, which are related to the complex reflection groups $H_n^s=\mathbb{Z}_s\wr
S_n$.

Our motivating belief, already present in \cite{bsp}, is that ``any result which holds
for $O_n,S_n$ should have a suitable extension to all the easy quantum groups''. This is
of course a quite vague statement, its precise target being actually formed by a number
of questions at the borderline between representation theory and probability theory.

It was suggested in \cite{bsp} that a first such application might come from the results
of Diaconis of Shahshahani in \cite{dsh}, regarding the groups $O_n,S_n$. We will show in
this paper that this is indeed the case:
\begin{enumerate}
\item The problematics makes indeed sense for all easy quantum groups.

\item There is a global approach to it, by using partitions and cumulants.

\item The new computations lead to a number of interesting conclusions.
\end{enumerate}

As a first example, consider the orthogonal group $O_n$, with fundamental representation
denoted $u$. The results in \cite{dsh}, that we will recover as well by using our
formalism, state that the asymptotic variables $u_k=\lim_{n\to\infty} {\rm Tr}(u^k)$ are
real Gaussian and independent, with variance $k$ and mean $0$ or 1, depending on whether
$k$ is odd or even.

In the case of $O_n^+$, however, the situation is quite different: the variables $u_k$
are free, as one could expect, but they are semicircular at $k=1,2$, and circular at
$k\geq 3$.

Summarizing, in the orthogonal case we have the following table:
\begin{center}
\begin{tabular}[t]{|l|l|l|l|l|}
\hline Variable&$O_n$&$O_n^+$\\
\hline
\hline $u_1$&real Gaussian&semicircular\\
\hline $u_2$&real Gaussian&semicircular\\
\hline $u_k$ ($k\geq 3$)&real Gaussian&circular\\
\hline
\end{tabular}
\end{center}
\medskip

In the symmetric case the situation is even more surprising, with the Poisson variables
from the classical case replaced by several types of variables:
\begin{center}
\begin{tabular}[t]{|l|l|l|l|l|}
\hline Variable&$S_n$&$S_n^+$\\
\hline
\hline $u_1$&Poisson&free Poisson\\
\hline $u_2-u_1$&Poisson&semicircular\\
\hline $u_k-u_1$ ($k\geq 3$)&sum of Poissons&circular\\
\hline
\end{tabular}
\end{center}
\medskip

We will present as well similar computations for the groups $H_n,B_n$, for their free
analogues $H_n^+,B_n^+$, for the half-liberated quantum groups $O_n^*$, $H_n^*$, as well
as for the series $H_n^{(s)}$. The calculations in the latter case rely essentially on
Diaconis-Shahshahani type results for the complex reflection groups
$H_n^s=\mathbb{Z}_s\wr S_n$.

The challenging question, that will eventually be left open, is to find a formal
``eigenvalue'' interpretation for all the quantum group results.

The paper is organized as follows. After a short Section 0 with notational remarks, we
recall the basic definitions and facts about easy quantum groups in Section 1. In Section
2, we recall the Weingarten formula for our easy quantum groups and use it to derive a
formula for the moments of traces of powers. In Section 3, this is refined to a formula
for corresponding cumulants, in the classical and the free cases. In Sections 4-7, this
will then be used to study the orthogonal, bistochastic, symmetric, and hyperoctahedral
classical and quantum groups, respectively. Section 8 deals with the half-liberated
quantum groups $O_n^*$ and $H_n^*$. The results in these cases will rely on the
observation that these half-liberated quantum groups are in some sense orthogonal
versions of classical groups, $U_n$ for $O_n^*$ and $H^{\infty}$ for $H_n^*$. The main
calculations will take place for these classical groups. The same ideas work actually for
the half-liberated series $H_n^{(s)}$, by considering those as orthogonal versions of the
complex reflection groups $H_n$. One of the main results in Section 8 is a
Diaconis-Shahshahani type result for those classical reflection groups. In Section 9, we
finish with some concluding remarks and open problems.

\subsection*{Acknowledgements}

The work of T.B. was supported by the ANR grants ``Galoisint'' and ``Granma'', and the
work of R.S. was supported by a Discovery grant from NSERC.

\section*{0. Notation}

\subsection*{Quantum groups}
As in our previous work \cite{bsp}, the basic object under consideration will be a
compact quantum group $G$. The concrete examples of such quantum groups include the usual
compact groups $G$, and, to some extent, the duals of discrete groups $\widehat{\Gamma}$.
In the general case, however, $G$ is just a fictional object, which exists only via its
associated Hopf $C^*$-algebra of ``complex continuous functions'', generically denoted
$A$.

The fact that $G$ itself doesn't exist is not really an issue, because many advanced
tools coming from algebra, analysis and geometry are available. In fact, to the
well-known criticism stating that ``quantum groups don't exist'', our answer would be
that ``classical groups exist, indeed, but is their existence property the most
important?''.

For simplicity of notation, we will rather use the quantum group $G$ instead of the Hopf
algebra $A$. For instance we will write integrals of the following type:
$$\int_Gu_{i_1j_1}\ldots u_{i_kj_k}\,du$$

The value of this integral is of course the complex number obtained by applying the Haar
functional $\varphi:A\to\mathbb C$ to the well-defined quantity $u_{i_1j_1}\ldots
u_{i_kj_k}\in A$.

We will use the quantum group notation depending on the setting: in case where this can
lead to confusion, we will rather switch back to the Hopf algebra notation.

\subsection*{Partitions}
For the notations and basic facts around the set of all and non-crossing partitions we
refer to \cite{nsp} and our previous papers \cite{bsp,bcs1,bcs2}. We will in particular
use the following notations.

$P_k$ denotes the set of partitions of the set $\{1,\dots,k\}$. $1_k$ denotes the maximal
element in $P_k$, which consists only of one block. For a partition $\pi\in P_k$ we
denote by $\vert \pi\vert$ the number of blocks of $\pi$.

With $\ii$ we will usually denote multi-indices $\ii=(i_1,\dots,i_k)$. Often, the
constraints in sums for such indices are given in terms of their kernel, $\ker\ii=\ker
(i_1,\dots,i_k)$. This is the partition in $P_k$ determined as follows:
$$\text{$s$ and $t$ are in the same block of $\ker\ii$}\qquad
\Longleftrightarrow \qquad i_s=i_t.$$

For given $k_1,\dots,k_r\in \NN$, $k:=\sum k_i$, we will denote by $\gamma\in S_k$ the
permutation with cycles $(1,\ldots,k_1)$, $(k_1+1,\ldots,k_1+k_2)$, \ldots
,$(k-k_s+1,\ldots,k)$. If we have, in addition, a partition $\sigma\in P_r$, then
$\sigma^\delta$ will denote the canonical lift of $\sigma$ from $P_r$ to $P_k$,
associated to $\gamma$. Thus, $\sigma^\gamma$ is that partition which we get from
$\sigma$ by replacing each $j\in\{1,\dots,r\}$ by the $j$-th cycle of $\gamma$, i.e.,
$\sigma^\gamma\geq \gamma$ and the $i$-th and the $j$-th cycle of $\gamma$ are in the
same block of $\sigma^\gamma$ if and only if $i$ and $j$ are in the same block of
$\sigma$.

As an example, let $\gamma=(1) (2,3,4) (5,6)$. Consider now $\sigma=\{(1,2),(3)\}\in
P_3$:

\setlength{\unitlength}{0.5cm}
\[
  \begin{picture}(3,1.5)\thicklines
  \put(0,0){\line(0,1){1}}
  \put(0,0){\line(1,0){1}}
  \put(1,0){\line(0,1){1}}
  \put(2,0){\line(0,1){1}}
  \put(0,1.5){\makebox(0,0){$1$}}
  \put(1,1.5){\makebox(0,0){$2$}}
  \put(2.0,1.5){\makebox(0,0){$3$}}
  \end{picture}
\]
Then $\sigma^\gamma$ is given by making the replacements $1 \to 1$, $2 \to 2,3,4$ and $3
\to 5,6$,
 \setlength{\unitlength}{0.5cm}
\[
  \begin{picture}(5,2)\thicklines
  \put(0,0){\line(0,1){1}}
  \put(0,0){\line(1,0){3}}
\put(4,0){\line(1,0){1}}
  \put(1,0){\line(0,1){1}}
  \put(2,0){\line(0,1){1}}
  \put(3,0){\line(0,1){1}}
  \put(4,0){\line(0,1){1}}
  \put(5,0){\line(0,1){1}}
  \put(0,1.5){\makebox(0,0){$1$}}
  \put(1,1.5){\makebox(0,0){$2$}}
  \put(2,1.5){\makebox(0,0){$3$}}
  \put(3,1.5){\makebox(0,0){$4$}}
  \put(4,1.5){\makebox(0,0){$5$}}
  \put(5,1.5){\makebox(0,0){$6$}}
  \end{picture}
\]
thus $\sigma^\gamma=\{(1,2,3,4),(5,6)\}\in P_6$.

Note that in \cite{nsp} the notation $\hat \sigma$ was used for $\sigma^\gamma$.

\section{Easy quantum groups}

In this section and in the next one we briefly recall some notions and results from
\cite{bsp,bcs1}.

Consider first a compact group satisfying $S_n\subset G\subset O_n$. That is, $G\subset
O_n$ is a closed subgroup, containing the subgroup $S_n\subset O_n$ formed by the
permutation matrices.

Let $u,v$ be the fundamental representations of $G,S_n$. By functoriality we have
$Hom(u^{\otimes k},u^{\otimes l})\subset Hom(v^{\otimes k},v^{\otimes l})$, for any
$k,l$. On the other hand, the Hom-spaces for $v$ are well-known: they are spanned by
certain explicit operators $T_p$, with $p$ belonging to $P(k,l)$, the set of partitions
between $k$ points and $l$ points. More precisely, if $e_1,\ldots,e_n$ denotes the
standard basis of $\mathbb C^n$, the formula of $T_p$ is as follows:
$$T_p(e_{i_1}\otimes\ldots\otimes e_{i_k})=\sum_{j_1,\ldots,j_l}
\delta_p\begin{pmatrix}i_1&\ldots&i_k\\ j_1&\ldots&j_l
\end{pmatrix}e_{j_1}\otimes\ldots\otimes e_{j_l}$$

Here the $\delta$ symbol on the right is 0 or 1, depending on whether the indices ``fit''
or not, i.e. $\delta=1$ if all blocks of $p$ contains equal indices, and $\delta=0$ if
not.

We conclude from the above discussion that the space $Hom(u^{\otimes k},u^{\otimes l})$
consists of certain linear combinations of operators of type $T_p$, with $p\in P(k,l)$.

We call $G$ ``easy'' if its tensor category is spanned by partitions.

\begin{definition}\label{def:easy}
A compact group $S_n\subset G\subset O_n$ is called \emph{easy} if there exist sets
$D(k,l)\subset P(k,l)$ such that $Hom(u^{\otimes k},u^{\otimes l})=span(T_p|p\in
D(k,l))$, for any $k,l$.
\end{definition}

It follows from the axioms of tensor categories that the collection of sets $D(k,l)$ must
be closed under certain categorical operations, namely the vertical and horizontal
concatenation, and the upside-down  turning. The corresponding algebraic structure formed
by the sets $D(k,l)$, axiomatized in \cite{bsp}, is called ``category of partitions''.

We denote by $H_n=\mathbb Z_2\wr S_n$ the hyperoctahedral group, formed by the monomial
(i.e. permutation-like) matrices having $\pm 1$ nonzero entries. The bistochastic group,
$B_n\simeq O_{n-1}$, is by definition formed by the matrices in $O_n$ having sum 1 on
each row and each column. Finally, the modified symmetric and bistochastic groups are by
definition $S_n'=\mathbb Z_2\times S_n$ and $B_n'=\mathbb Z_2\times B_n$, both viewed as
subgroups of $O_n$. See \cite{bsp}.

\begin{theorem}\label{thm:easy-groups}
There are exactly $6$ easy orthogonal groups, namely:
\begin{enumerate}
\item $O_n$: the orthogonal group.

\item $S_n$: the symmetric group.

\item $H_n$: the hyperoctahedral group.

\item $B_n$: the bistochastic group.

\item $S_n'$: the modified symmetric group.

\item $B_n'$: the modified bistochastic group.
\end{enumerate}
\end{theorem}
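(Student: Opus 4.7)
The plan is to invoke Tannakian duality for compact quantum groups (the same tool that underlies the entire easy quantum group framework) to translate the classification problem for easy groups $S_n\subset G\subset O_n$ into a purely combinatorial classification problem for categories of partitions $D$ satisfying $P_2\subset D\subset P$, where $P_2$ is the category of all pair partitions (corresponding to $O_n$) and $P$ is the category of all partitions (corresponding to $S_n$). The inclusion $S_n\subset G$ forces $D\subset P$, and $G\subset O_n$ forces $P_2\subset D$, since these are the respective Hom-spaces for the fundamental representations.

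The first step is the existence direction: exhibit for each of the six groups an explicit category of partitions. I would take $D=P_2$ for $O_n$, $D=P$ for $S_n$, $D=P_{\mathrm{even}}$ (all blocks of even size) for $H_n$, $D=P_{12}$ (blocks of size $\leq 2$) for $B_n$, and for the primed versions $S_n'$ and $B_n'$ take the subcategories of $P$ and $P_{12}$ consisting of partitions in which the total number of points is even on each row (this captures the extra $\mathbb{Z}_2$ factor). In each case one checks that the span of the $T_p$ operators is exactly the Hom-space, using the standard Weingarten/Tannakian identification.

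The second, harder step is the uniqueness direction: show that any category $D$ with $P_2\subset D\subsetneq P$ must be one of the five non-$S_n$ examples. Here I would analyze $D\setminus P_2$ according to the block sizes appearing. If $D$ contains a partition with a block of odd size $\geq 3$, I would show, by composing with pair partitions and using horizontal/vertical concatenation, that the singleton is generated, and then by further closure operations that all partitions are generated, forcing $D=P$ or $D$ to be the ``primed'' variant depending on a parity constraint. If every block in every partition of $D$ has even size, then $D\subset P_{\mathrm{even}}$, and a similar generation argument pins down $D=P_{\mathrm{even}}$. If $D$ contains a singleton but no larger odd block, one is forced into the bistochastic families $B_n$ or $B_n'$ again according to a parity invariant.

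The main obstacle is the bookkeeping in this uniqueness step: one must verify that the three categorical operations (tensor concatenation, composition modulo middle loops, and upside-down turning) genuinely generate the full standard categories from any single ``sporadic'' partition one attempts to adjoin, and one must isolate the $\mathbb{Z}_2$ parity invariant that distinguishes $S_n$ from $S_n'$ and $B_n$ from $B_n'$. This is essentially a finite combinatorial case analysis on the possible block-size patterns, carried out in detail in \cite{bsp}.
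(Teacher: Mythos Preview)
Your approach is essentially the paper's: reduce via Tannakian duality to classifying categories of partitions $D$ with $P_2\subset D\subset P$, then invoke the six-fold classification from \cite{bsp}. The paper's own proof is in fact nothing more than that citation together with the list of the six categories, so your sketch is already more detailed than what appears here.

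Two small corrections to your sketch, for the record. First, the parity condition for the primed categories is that the \emph{total} number of legs $k+l$ be even, not that each row separately be even; your condition would exclude the identity partition in $P(1,1)$ and hence does not define a tensor category. Second, your case analysis is not quite exhaustive as written: the ``all blocks even'' branch must still separate $P_o$ from $P_h$, and the ``singleton but no larger odd block'' branch must first rule out even blocks of size $\geq 4$ (a singleton together with a $4$-block generates a $3$-block and hence all of $P$) before concluding $D\subset P_b$. Since you explicitly defer the full case analysis to \cite{bsp}, these do not affect the correctness of the proposal.
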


\begin{proof}
As explained in \cite{bsp}, this follows from a 6-fold classification result for the
corresponding categories of partitions, which are as follows:

(1) $P_o$: all pairings.

(2) $P_s$: all partitions.

(3) $P_h$: partitions with blocks of even size.

(4) $P_b$: singletons and pairings.

(5) $P_{s'}$: all partitions (even part).

(6) $P_{b'}$: singletons and pairings (even part).
\end{proof}

Let us discuss now the free analogue of the above results. Let $O_n^+,S_n^+$ be the free
orthogonal and symmetric quantum groups, corresponding to the Hopf algebras
$A_o(n),A_s(n)$ constructed by Wang in \cite{wa1}, \cite{wa2}. Here, and in what follows,
we use Woronowicz's compact quantum group formalism in \cite{wo1}, cf. section 0 above.

We have $S_n\subset S_n^+$, so by functoriality the Hom-spaces for $S_n^+$ appear as
subspaces of the corresponding Hom-spaces for $S_n$. The Hom-spaces for $S_n^+$ have in
fact a very simple description: they are spanned by the operators $T_p$, with $p\in
NC(k,l)$, the set of noncrossing partitions between $k$ upper points and $l$ lower
points.

We have the following ``free analogue'' of Definition \ref{def:easy}.

\begin{definition}\label{def:freeqg}
A compact quantum group $S_n^+\subset G\subset O_n^+$ is called \emph{free} if there
exist sets $D(k,l)\subset NC(k,l)$ such that $Hom(u^{\otimes k},u^{\otimes
l})=span(T_p|p\in D(k,l))$, for any $k,l$.
\end{definition}

In this definition, the word ``free'' has of course a quite subtle meaning, to be fully
justified later on. For the moment, let us just record the fact that the passage from
Definition \ref{def:easy} to Definition \ref{def:freeqg} is basically done by
``restricting attention to the noncrossing partitions'', which, according to \cite{sp1},
should indeed lead to freeness.

As in the classical case, the sets of partitions $D(k,l)$ must be stable under certain
categorical operations, coming this time from the axioms in \cite{wo2}. The corresponding
algebraic structure, axiomatized in \cite{bsp}, is called ``category of noncrossing
partitions''.

We denote by $H_n^+$ the hyperoctahedral quantum group, constructed in \cite{bbc}, and by
$B_n^+,S_n'^+,B_n'^+$ the free analogues of the groups $B_n,S_n',B_n'$, constructed in
\cite{bsp}.

\begin{theorem}\label{thm:free-quantum-groups}
There are exactly $6$ free orthogonal quantum groups, namely:
\begin{enumerate}
\item $O_n^+$: the orthogonal quantum group.

\item $S_n^+$: the symmetric quantum group.

\item $H_n^+$: the hyperoctahedral quantum group.

\item $B_n^+$: the bistochastic quantum group.

\item $S_n'^+$: the modified symmetric quantum group.

\item $B_n'^+$: the modified bistochastic quantum group.
\end{enumerate}
\end{theorem}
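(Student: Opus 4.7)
The plan is to run the proof in complete parallel with that of Theorem \ref{thm:easy-groups}, replacing the Tannakian duality for classical orthogonal groups by Woronowicz's Tannakian duality for compact matrix quantum groups from \cite{wo2}. Concretely, the first step is to invoke Tannakian duality in the form already used in \cite{bsp}: a free quantum group $S_n^+\subset G\subset O_n^+$ in the sense of Definition \ref{def:freeqg} is determined by its category of noncrossing partitions $D=(D(k,l))$, which must contain the category $NC_2$ (the one corresponding to $O_n^+$) and be contained in $NC$ (the one corresponding to $S_n^+$), and must be stable under the vertical and horizontal concatenation and the upside-down turning, as axiomatized in \cite{bsp}. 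Conversely, every such category of noncrossing partitions produces a free quantum group. Hence the theorem reduces to classifying all categories of noncrossing partitions $D$ with $NC_2\subset D\subset NC$.

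The second step is the combinatorial classification of such $D$. The candidate list, obtained by intersecting with $NC$ each of the six classical categories from the proof of Theorem \ref{thm:easy-groups}, is:
\begin{enumerate}
\item $NC_2$: all noncrossing pairings.
\item $NC$: all noncrossing partitions.
\item $NC_h$: noncrossing partitions with blocks of even size.
\item $NC_b$: noncrossing singletons and pairings.
\item $NC_{s'}$: all noncrossing partitions, even part.
\item $NC_{b'}$: noncrossing singletons and pairings, even part.
\end{enumerate}
Each is straightforwardly closed under the category operations, so these are genuine categories of noncrossing partitions. To prove exhaustiveness, I would argue in the style of \cite{bsp}: given $D$ with $NC_2\subsetneq D$, one picks a partition $p\in D\setminus NC_2$ and uses the category operations to extract a small generator from $p$, of one of the following types: a singleton (pushing $D$ into a $B$-type category, with or without parity), a block of size $\geq 4$ (pushing $D$ towards $NC_h$ or $NC$), or an odd-sized non-singleton block (forcing $D$ up to $NC$). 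A case analysis on which such generators $D$ contains, together with the parity (``even part'') dichotomy governed by whether the total number of points is constrained modulo two, shows that $D$ must coincide with one of the six categories listed above. Because everything is noncrossing, this case analysis is actually easier than its classical counterpart: no bookkeeping of crossings is needed and the generators can be extracted by purely local manipulations.

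The third step is the identification with the six quantum groups. For each category in the list one must check that the intertwiner relations coming from its generators are exactly the defining relations of the Hopf algebra of the claimed quantum group: $NC_2$ yields $A_o(n)$, $NC$ yields $A_s(n)$, $NC_h$ yields the algebra defining $H_n^+$ from \cite{bbc}, and the remaining three cases yield $B_n^+,S_n'^+,B_n'^+$ as constructed in \cite{bsp}. I expect the main obstacle to be the exhaustiveness part of the combinatorial classification, namely ruling out ``exotic'' categories strictly between two of the six above; this amounts to a careful generator analysis as in \cite{bsp}, and is the only place where genuine combinatorial work is required. The Tannakian reduction, the closure check of the six candidate categories, and the identification with the six quantum groups are all direct transcriptions of material already in \cite{bsp,bbc,wa1,wa2,wo2}.
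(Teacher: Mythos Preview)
Your proposal is correct and follows exactly the approach indicated in the paper: the paper's own proof simply refers to \cite{bsp} for the 6-fold classification of categories of noncrossing partitions $NC_o,NC_s,NC_h,NC_b,NC_{s'},NC_{b'}$, which is precisely the Tannakian-reduction-plus-combinatorial-classification scheme you outline. You have supplied more detail than the paper gives here, but the strategy is identical.
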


\begin{proof}
As explained in \cite{bsp}, this follows from a 6-fold classification result for the
corresponding categories of noncrossing partitions, which are as follows:

(1) $NC_o$: all noncrossing pairings.

(2) $NC_s$: all noncrossing partitions.

(3) $NC_h$: noncrossing partitions with blocks of even size.

(4) $NC_b$: singletons and noncrossing pairings.

(5) $NC_{s'}$: all noncrossing partitions (even part).

(6) $NC_{b'}$: singletons and noncrossing pairings (even part).
\end{proof}

Observe the symmetry between Theorem \ref{thm:easy-groups} and Theorem
\ref{thm:free-quantum-groups}: this corresponds to the ``liberation'' operation for
orthogonal Lie groups, further investigated in \cite{bsp}.

Consider now the general situation where we have a compact quantum group satisfying
$S_n\subset G\subset O_n^+$. Once again, we can ask for the tensor category of $G$ to be
spanned by certain partitions, coming from the tensor category of $S_n$.

\begin{definition}
A compact quantum group $S_n\subset G\subset O_n^+$ is called \emph{easy} if there exist
sets $D(k,l)\subset P(k,l)$ such that $Hom(u^{\otimes k},u^{\otimes l})=span(T_p|p\in
D(k,l))$, for any $k,l$.
\end{definition}

As a first remark, this definition generalizes at the same time Definition \ref{def:easy}
and Definition \ref{def:freeqg}. In fact, the easy quantum groups $S_n\subset G\subset
O_n^+$ satisfying the extra assumption $G\subset O_n$ are precisely the easy groups, and
those satisfying the extra assumption $S_n^+\subset G$ are precisely the free quantum
groups.

Once again, the sets of partitions $D(k,l)$ must be stable under certain categorical
operations, coming from the axioms in \cite{wo2}. The corresponding algebraic structure,
axiomatized in \cite{bsp}, is called ``full category of partitions''.

We already know that the easy orthogonal quantum groups include the 6 easy groups in
Theorem \ref{thm:easy-groups}, and the 6 free quantum groups in Theorem
\ref{thm:free-quantum-groups}. In \cite{bsp}, two more canonical examples of easy quantum
groups were found. These extra two examples are the quantum groups $O_n^*,H_n^*$,
obtained as ``half-liberations'' of $O_n,H_n$. The idea is as follows: instead of
removing the commutativity relations of type $ab=ba$ from the standard presentation of
$C(G)$, which would produce the algebra $C(G^+)$, we replace these commutativity
relations by the weaker relations $abc=cba$, which produce by definition the algebra
$C(G^*)$. See \cite{bsp}, where also the following theorem is proved.

\begin{theorem}
The following are easy orthogonal quantum groups:
\begin{enumerate}
\item $O_n^*$: the half-liberated orthogonal group.

\item $H_n^*$: the half-liberated hyperoctahydral group.

\end{enumerate}

These correspond to the following categories of partitions:
\begin{enumerate}
\item $E_o$: pairings with each pair connecting an odd and an even number.
\item
$E_h$: partitions with each block having the same number of odd and even legs.
\end{enumerate}

\end{theorem}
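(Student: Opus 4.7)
The plan is to apply the Tannaka--Krein duality of Woronowicz \cite{wo2}, as adapted to the easy setting in \cite{bsp}, which puts easy quantum groups $S_n\subset G\subset O_n^+$ in bijection with full categories of partitions $D\subset P$: the category $D$ is recovered from $G$ as $D(k,l)=\{p:T_p\in\mathrm{Hom}(u^{\otimes k},u^{\otimes l})\}$, and conversely the defining relations of $G$ on the generators $u_{ij}$ are generated by the intertwining relations attached to any generating set of $D$. This reduces the theorem to two tasks: first, to verify that $E_o$ and $E_h$ are themselves full categories of partitions; and second, to match the quantum groups they determine with $O_n^*$ and $H_n^*$.

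For the first task, I would check directly that the defining properties---``each pair joins one odd and one even position'' for $E_o$, and ``each block has equal numbers of odd and even legs'' for $E_h$---are preserved under horizontal concatenation, vertical composition, and upside-down reflection. The only bookkeeping point is tracking how parities of labels transform under these operations, in particular under vertical composition, where intermediate points get relabelled and contribute separately to the odd/even count; in each case the constraint propagates.

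For the second task, the key input is a direct computation: taking $\rho\in P(3,3)$ to be the pairing joining upper point $i$ with lower point $4-i$, the intertwining condition $T_\rho\,u^{\otimes 3}=u^{\otimes 3}\,T_\rho$ unravels to the half-commutation identity $u_{ij}u_{kl}u_{pq}=u_{pq}u_{kl}u_{ij}$. Since the six positions of $\rho$ alternate in parity, each of its pairs is odd--even, so $\rho\in E_o\subset E_h$. Consequently the quantum groups attached to $E_o$ and $E_h$ automatically satisfy the half-commutation relations defining $O_n^*$ and $H_n^*$, together with the relations defining $O_n^+$ and $H_n^+$ coming from the other generators of the two categories. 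The hard part will be the converse containment: showing that \emph{every} partition in $E_o$ (resp.\ $E_h$) can be generated from $\rho$ together with the generators of $NC_o$ (resp.\ $NC_h$) via horizontal/vertical concatenation and reflection, so that no extra relations appear. I expect this to reduce, as in \cite{bsp}, to a combinatorial normal-form result: any odd--even pairing can be written as a vertical composition of non-crossing pairings and copies of $\rho$, and the analogous statement for $E_h$ follows by a similar reduction combined with the description of blocks in $P_h$ as built from pairs.
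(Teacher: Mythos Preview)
The paper does not give its own proof of this statement: it simply records that the result is proved in \cite{bsp}, so there is nothing here to compare against directly. Your plan is essentially the argument carried out in \cite{bsp}---Tannaka--Krein reduces the question to identifying the category of partitions, the half-commutation relation $abc=cba$ corresponds exactly to the intertwiner $T_\rho$ for the three-leg crossing $\rho$, and the remaining work is the combinatorial generation lemma showing that $\langle NC_o,\rho\rangle=E_o$ and $\langle NC_h,\rho\rangle=E_h$. One point to be careful about in your bookkeeping step: stability of the odd/even condition under horizontal concatenation is not automatic unless the number of upper (and lower) legs of the left factor is even, so in \cite{bsp} the category axioms are formulated so that this parity issue is handled via the alternating colouring convention; you should make sure your formulation of ``odd/even leg'' matches that convention before checking closure.
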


In addition to the 14 natural examples defined above, there are also two infinite
``hyperoctahedral'' series $H_n^{(s)}$ and $H_n^{[s]}$. These are introduced in
\cite{bcs1}, where we give also some partial classification results for easy quantum
groups, with the conjectural conclusion that the easy quantum groups consists of the 14
natural examples, and a multi-parameter ``hyperoctahedral'' series unifying $H_n^{(s)}$
and $H_n^{[s]}$. In the present paper we will mainly consider the natural easy quantum
groups. Since the modified permutation and bistochastic groups $S'_n$, $B'_n$ and their
free versions $S'^+_n$, $B'^+_n$ are somewhat trivial modifications of their ``unprimed''
versions, we will not consider them any further, and thus restrict our attention to the
easy groups $O_n$, $S_n$, $H_n$, $B_n$, the free quantum groups $O_n^+$, $S_n^+$,
$H_n^+$, $B_n^+$, and the half-liberated quantum groups $O_n^*$, $H_n^*$. Since it turns
out that also the series $H_n^{(s)}$ (which includes $H_n$ and $H_n^*$ for $s=2$ and
$s=\infty$, respectively) can be treated by the same methods as $H_n$ and $H_n^*$, we
will also include this series in our considerations. The series $H_n^{[s]}$, on the other
side, is quite elusive at the moment, and it seems that one needs new tools to address
them. We plan to return to this question after completing the full classification of all
easy quantum groups.

Let us finally describe also the quantum groups $H_n^{(s)}$ in terms of their category of
partitions. For more details, as well as the proof of the following theorem, see
\cite{bcs1}.

\begin{theorem}\label{thm:hyper-series}
For $s\in\{2,3,4,\dots,\infty\}$, $H_n^{(s)}$ is an easy quantum group, and its
associated category $E_h^s$ is that of the ``$s$-balanced'' partitions, i.e. partitions
satisfying the following conditions:
\begin{enumerate}
\item The total number of legs is even.

\item In each block, the number of odd legs equals the number of even legs, modulo $s$.
(For $s=\infty$, this means that the number of odd legs equals the number of even legs.)
\end{enumerate}
\end{theorem}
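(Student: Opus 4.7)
The plan is to establish the theorem by Tannakian reconstruction for easy quantum groups in the form developed in \cite{bsp}: one verifies that $E_h^s$ is a category of partitions, and then identifies the associated easy quantum group with $H_n^{(s)}$ as constructed in \cite{bcs1}. This parallels the strategy used for Theorem \ref{thm:easy-groups}, Theorem \ref{thm:free-quantum-groups}, and the half-liberated case.

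The first step is a combinatorial check that $E_h^s$ is closed under the three categorical operations. For horizontal concatenation of $p_1 \in E_h^s(k_1,l_1)$ and $p_2 \in E_h^s(k_2,l_2)$, each block of $p_1 \otimes p_2$ is a block of $p_1$ or a block of $p_2$; condition (1) forces the total number of legs of $p_1$ to be even, so the shift applied to the legs of $p_2$ preserves parity, and each block remains $s$-balanced. For the involution (upside-down turning), condition (1) ensures the reflection swaps odd and even positions globally, and the $s$-balanced condition is symmetric in odd/even, so it is preserved. For vertical concatenation of $p : k \to m$ and $q : m \to l$, new blocks are formed by fusing blocks of $p$ and $q$ along the identified middle row and discarding closed loops; at each identified middle point, the contributions from $p$ and $q$ cancel in pairs of matching parities, so the net $(\#\text{odd} - \#\text{even})$ count on each merged block is $\equiv 0 \pmod s$, and the total number of remaining legs stays even.

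For the second step, I would identify a generating family for $E_h^s$: since $E_h^s$ sits between $P_h = E_h^2$ and $E_h = E_h^\infty$, a natural candidate is to enlarge the generators of $E_h$ by a partition encoding the mod-$s$ balance explicitly, for instance a $2s$-block with a prescribed alternating odd/even pattern. Translating the associated operators $T_p$ into relations on the generators $u_{ij}$ should reproduce the defining relations of $H_n^{(s)}$ in \cite{bcs1}.

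Finally, one verifies the consistency at the endpoints $s = 2$ and $s = \infty$: when $s = 2$, the condition $\#\text{odd} \equiv \#\text{even} \pmod 2$ in each block is equivalent to the block having even size, so $E_h^2 = P_h$, matching $H_n^{(2)} = H_n$; when $s = \infty$, the condition reads $\#\text{odd} = \#\text{even}$, giving $E_h^\infty = E_h$ and hence $H_n^{(\infty)} = H_n^*$. The main obstacle lies in the second step: finding an economical generating set for $E_h^s$ and showing that the resulting Tannakian relations coincide precisely with the defining relations of $H_n^{(s)}$, rather than merely being contained in one direction.
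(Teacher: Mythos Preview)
The paper does not give a proof of this theorem: it explicitly defers to the companion paper, stating just before the theorem that ``for more details, as well as the proof of the following theorem, see \cite{bcs1}.'' So there is no argument in the present paper to compare your proposal against.

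Your outline is the natural strategy and is essentially what is carried out in \cite{bcs1}: verify that $E_h^s$ is stable under the categorical operations (and contains the unit and duality partitions), then invoke Tannaka--Krein reconstruction and match the resulting relations with the presentation of $H_n^{(s)}$. Your self-identified obstacle in the second step --- producing a generating set for $E_h^s$ and showing that the operators $T_p$ for those generators impose exactly the defining relations of $H_n^{(s)}$, no more and no less --- is indeed the substantive part of the argument, and is where the work in \cite{bcs1} lies.

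One point of caution in your first step: the check for vertical concatenation is more delicate than you indicate. The odd/even labeling of legs in a partition of $P(k,l)$ depends on a convention for numbering the $k+l$ legs in a single string, and when composing $p \in P(k,m)$ with $q \in P(m,l)$ the middle-row points need not carry matching parities from the two sides unless one keeps track of the parity of $k$. The cancellation you describe is correct in spirit, but making it precise requires fixing the numbering convention and tracking the parity shift carefully; this is handled in \cite{bcs1}.
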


\section{Moments of powers}

In this section we discuss the computation of the asymptotic joint distribution of the
variables ${\rm Tr}(u^k)$, generalizing the fundamental character $\chi={\rm Tr}(u)$.

Let us first recall some general results from \cite{bsp}. Let $G$ be an easy orthogonal
quantum group, and denote by $D_k\subset P(0,k)$ the corresponding sets of diagrams,
having no upper points. We define the Gram matrix to be $G_{kn}(p,q)=n^{\vert p\vee
q\vert}$, where $\vert p\vert$ denotes the number of blocks of the partition $p$. The
Weingarten matrix is by definition its inverse, $W_{kn}=G_{kn}^{-1}$. In order for this
inverse to exist, $n$ has to be big enough, and the assumption $n\geq k$ is sufficient.
See \cite{bsp}.

We use the notation for integrals from section 0 above.

\begin{theorem}\label{thm:Weingarten-formula}
The Haar integration over $G$ is given by
$$\int_Gu_{i_1j_1}\ldots u_{i_kj_k}\,du=\sum_{
\substack{p,q \in D_k\\ p \leq \ker \mathbf i\\ q \leq \ker \mathbf j} }W_{kn}(p,q)$$
\end{theorem}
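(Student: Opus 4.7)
The plan is to compute the integral by identifying it with a matrix entry of the orthogonal projection $P : (\mathbb{C}^n)^{\otimes k} \to \mathrm{Fix}(u^{\otimes k})$ onto the subspace of fixed vectors of the representation $u^{\otimes k}$. The Peter--Weyl machinery for compact quantum groups developed in \cite{wo1} identifies
$$\int_G u^{\otimes k}\,du = P,$$
so that, reading this off in the standard basis $e_{\ii} = e_{i_1}\otimes \cdots \otimes e_{i_k}$ of $(\mathbb{C}^n)^{\otimes k}$, one obtains
$$\int_G u_{i_1 j_1}\cdots u_{i_k j_k}\,du = \langle P\, e_{\mathbf{j}}, e_{\ii}\rangle.$$
Everything then reduces to writing $P$ down explicitly in this basis.

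This is where the easiness hypothesis enters: the space $\mathrm{Fix}(u^{\otimes k}) = \mathrm{Hom}(1, u^{\otimes k})$ is spanned by the vectors $\xi_p := T_p(1)$ for $p \in D_k$, which by the definition of $T_p$ recalled in Section~1 are
$$\xi_p = \sum_{\ii:\, p \leq \ker\ii} e_{i_1}\otimes\cdots\otimes e_{i_k}.$$
Two direct computations follow. First, $\langle \xi_p, e_{\ii}\rangle$ equals $1$ if $p \leq \ker\ii$ and $0$ otherwise. Second,
$$\langle\xi_p,\xi_q\rangle = \#\{\ii : \ker\ii \geq p \vee q\} = n^{|p\vee q|} = G_{kn}(p,q),$$
so the Gram matrix of the family $\{\xi_p\}_{p\in D_k}$ is precisely the matrix $G_{kn}$ from the theorem. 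The hypothesis $n\geq k$ guarantees, as recalled from \cite{bsp}, that this family is linearly independent and that $G_{kn}$ is invertible, so the standard dual-basis formula for orthogonal projection onto the span of a linearly independent family yields
$$P = \sum_{p,q\in D_k} W_{kn}(p,q)\,|\xi_p\rangle\langle \xi_q|.$$

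Substituting everything back gives
$$\langle P\,e_{\mathbf{j}}, e_{\ii}\rangle = \sum_{p,q\in D_k} W_{kn}(p,q)\,\langle \xi_p, e_{\ii}\rangle\,\langle \xi_q, e_{\mathbf{j}}\rangle = \sum_{\substack{p,q\in D_k\\ p\leq \ker\ii,\, q\leq \ker\mathbf{j}}} W_{kn}(p,q),$$
which is the desired identity. The one genuinely nontrivial ingredient underlying this plan is the invertibility of the Gram matrix $G_{kn}$ for $n\geq k$, established in \cite{bsp} by a diagonal-domination argument on $n^{|p\vee q|}$; the remaining steps consist of the Peter--Weyl identification of integration with the fixed-vector projection, the easiness-based description of $\mathrm{Fix}(u^{\otimes k})$, and routine linear algebra, so I anticipate no serious obstacle beyond assembling these ingredients carefully.
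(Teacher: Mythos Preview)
Your proposal is correct and follows exactly the approach the paper indicates: the paper's proof simply cites \cite{bsp} and states that the integrals on the left together form the orthogonal projection onto $\mathrm{Fix}(u^{\otimes k})=\mathrm{span}(D_k)$, which is precisely the Peter--Weyl identification plus Gram-matrix inversion you carry out in detail. You have faithfully reconstructed the argument behind that one-line sketch.
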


\begin{proof}
This is proved in \cite{bsp}, the idea being that the integrals on the left,taken
altogether, form the orthogonal projection on $Fix(u^{\otimes k})=span(D_k)$.
\end{proof}

The above formula can be used for computing the asymptotic moments, cumulants and
spectral densities of the truncated characters $\chi_t=\sum_{i=1}^{[tn]}u_{ii}$. Without
getting into details, let us just mention that the laws of truncated characters are given
as follows (see \cite{bsp,bcs1,bcs2} for notions and proofs):

\begin{enumerate}
\item For $O_n,S_n,H_n,B_n$ we get the Gaussian, Poisson, Bessel
and shifted Gaussian laws, which form convolution semigroups.

\item For $O_n^+,S_n^+,H_n^+,B_n^+$ we get the semicircular, free
Poisson, free Bessel and shifted semicircular laws, which form free convolution
semigroups.

\item For $S_n',H_n',S_n'^+,H_n'^+$ we get the symmetrized versions of
the corresponding laws in (1,2), which do not form convolution or free convolution
semigroups, because the canonical copy of $\mathbb Z_2$ gives rise to a correlation.

\item
For $O_n^*,H_n^*$ we get squeezed versions of the complex Gaussian and Bessel measure,
which form ``half-independent '' convolution semigroups.
\end{enumerate}

We turn now to our main problem: the computation of the asymptotic laws of powers ${\rm
Tr}(u^k)$ with $k\in\mathbb N$, generalizing the usual characters $\chi={\rm Tr}(u)$. In
the classical case these laws, computed by Diaconis and Shahshahani in \cite{dsh}, can be
of course understood in terms of the asymptotic behavior of the eigenvalues of the random
matrices $u\in G$.

As in \cite{dsh}, we will be actually interested in the more general problem consisting
in computing the joint asymptotic law of the variables ${\rm Tr}(u^k)$, with $k\in\mathbb
N$ varying. In order to deal with these joint laws, it is convenient to use the following
definition.

\begin{definition}\label{def:trace-permutation}
Associated to $k_1,\ldots,k_s\in\mathbb N$ is the \emph{trace permutation} $\gamma\in
S_k$, with $k=\Sigma k_i$, having as cycles $(1,\ldots,k_1)$, $(k_1+1,\ldots,k_1+k_2)$,
\ldots ,$(k-k_s+1,\ldots,k)$.
\end{definition}

Our first general result concerns the joint moments of the variables ${\rm Tr}(u^k)$, and
is valid for any easy quantum groups.

We denote by $\gamma(q)$ the partition given by $i\sim_qj$ iff
$\gamma(i)\sim_{\gamma(q)}\gamma(j)$.

\begin{theorem}\label{thm:moments}
Let $G$ be an easy quantum group. Consider $s\in\NN$, $k_1,\dots,k_s\in\NN$,
$k:=\sum_{i=1}^s k_i$, and denote by $\gamma\in S_k$ the trace permutation associated to
$k_1,\ldots,k_s$. Then we have, for any $n$ such that $G_{nk}$ is invertible,
\begin{equation}\label{eq:moments}
\int_G {\rm Tr}(u^{k_1})\ldots {\rm Tr}(u^{k_s})\,du=\#\{p\in D_k|p=\gamma(p)\} +O(1/n).
\end{equation}

If $G$ is a classical easy group, then \eqref{eq:moments} is exact, without any lower
order corrections in $n$.
\end{theorem}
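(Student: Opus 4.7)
The plan is to unfold each $\mathrm{Tr}(u^{k_l})$ as a sum over multi-indices coupled by $\gamma$, apply the Weingarten formula of Theorem~\ref{thm:Weingarten-formula}, and collapse the result using the defining relation $W_{kn}=G_{kn}^{-1}$ on $D_k$. First I would write
\[
\prod_{l=1}^{s}\mathrm{Tr}(u^{k_l})=\sum_{\mathbf{i}\in\{1,\dots,n\}^{k}}\prod_{a=1}^{k}u_{i_a,\,i_{\gamma(a)}},
\]
where $\gamma$ is the trace permutation from Definition~\ref{def:trace-permutation}. Applying Theorem~\ref{thm:Weingarten-formula} to each integrated monomial and swapping sums, the two constraints $p\le\ker\mathbf{i}$ and $q\le\ker(i_{\gamma(1)},\dots,i_{\gamma(k)})$ combine into $p\vee\gamma(q)\le\ker\mathbf{i}$, since $\ker(i_{\gamma(\cdot)})=\gamma^{-1}(\ker\mathbf{i})$ in the notation fixed just before the theorem. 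The number of multi-indices satisfying this is $n^{|p\vee\gamma(q)|}=G_{kn}(p,\gamma(q))$, producing the master identity
\[
\int_G\prod_{l=1}^{s}\mathrm{Tr}(u^{k_l})\,du\ =\ \sum_{p,q\in D_k}W_{kn}(p,q)\,G_{kn}(p,\gamma(q)).
\]

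Next I would split this sum by whether $\gamma(q)\in D_k$. On the subset where $\gamma(q)\in D_k$, the inversion identity $\sum_{p\in D_k}W_{kn}(p,q)G_{kn}(p,r)=\delta_{q,r}$, valid for $r\in D_k$ and applied with $r=\gamma(q)$, collapses the inner sum to $\delta_{q,\gamma(q)}$, so this piece contributes exactly $\#\{q\in D_k:\gamma(q)=q\}$, the desired main term. For a classical easy group this already concludes the argument: the categories $P_o$, $P_s$, $P_h$, $P_b$, $P_{s'}$, $P_{b'}$ arising in the proof of Theorem~\ref{thm:easy-groups} are defined solely by constraints on block sizes (and, in the primed cases, on the total parity of the legs), hence each $D_k$ is invariant under arbitrary relabelings of its points; in particular $\gamma(D_k)=D_k$, the complementary part of the split sum is empty, and \eqref{eq:moments} holds exactly.

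In the general quantum case the noncrossing and half-liberated categories are genuinely position-sensitive, and the main technical obstacle is to show that the residual term
\[
E(n)\ =\ \sum_{\substack{p,q\in D_k\\\gamma(q)\notin D_k}}W_{kn}(p,q)\,n^{|p\vee\gamma(q)|}
\]
is $O(1/n)$. A clean packaging is the operator-theoretic reformulation $\int_G\prod_l\mathrm{Tr}(u^{k_l})\,du=\mathrm{Tr}(P\,U_\gamma)$, where $P$ is the orthogonal projection onto $\mathrm{Fix}(u^{\otimes k})=\mathrm{span}\{v_p:p\in D_k\}$ inside $(\mathbb{C}^n)^{\otimes k}$ and $U_\gamma$ permutes the $k$ tensor factors according to $\gamma$: classical easiness is precisely the condition $U_\gamma\,\mathrm{Ran}(P)\subseteq\mathrm{Ran}(P)$, which turns $\mathrm{Tr}(P\,U_\gamma)$ into a fixed-point count on the basis $\{v_q\}_{q\in D_k}$, while in the quantum case the failure of this invariance contributes only an $O(1/n)$ correction by the standard Weingarten asymptotic estimates for $W_{kn}(p,q)$.
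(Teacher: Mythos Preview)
Your setup matches the paper's exactly: expand each trace via the cycle structure of $\gamma$, apply Theorem~\ref{thm:Weingarten-formula}, and arrive at the master sum $\sum_{p,q\in D_k}W_{kn}(p,q)\,n^{|p\vee\gamma(q)|}$. Your treatment of the classical case---observing that each of the six categories in Theorem~\ref{thm:easy-groups} is defined purely by block-size (and global parity) conditions, hence is invariant under relabeling, so that $\gamma(q)\in D_k$ always and the inversion $W_{kn}G_{kn}=I$ collapses the sum exactly to $\#\{q\in D_k:\gamma(q)=q\}$---is precisely the paper's argument, stated a bit more explicitly.

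The quantum case, however, is left with a genuine gap. You correctly isolate the residual $E(n)$ and correctly name it as the obstacle, but you do not actually bound it: the operator identity $\int_G\prod_l\mathrm{Tr}(u^{k_l})\,du=\mathrm{Tr}(PU_\gamma)$ is true and pleasant, yet it does not by itself yield an $O(1/n)$ estimate, and ``by the standard Weingarten asymptotic estimates'' is a citation, not an argument. What the paper actually does here---and what your proof still needs---is to invoke the explicit leading order
\[
W_{kn}(p,q)=n^{|p\vee q|-|p|-|q|}\bigl(1+O(1/n)\bigr)
\]
and then observe that the total exponent $|p\vee\gamma(q)|+|p\vee q|-|p|-|q|$ is always $\le 0$, with equality precisely when $q\ge p$ and $p\ge\gamma(q)$; since $|\gamma(q)|=|q|$ this forces $p=q=\gamma(q)$. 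In your $E(n)$ the constraint $\gamma(q)\notin D_k$ (with $q\in D_k$) excludes $\gamma(q)=q$, so every term has exponent $\le -1$ and $E(n)=O(1/n)$ follows. The paper applies this exponent analysis uniformly to the whole sum rather than splitting first; your split is a legitimate reorganization and makes the origin of the error term more transparent, but the exponent inequality is the actual substance of the quantum step and cannot be left implicit.
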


\begin{proof}
We denote by $I$ the integral to be computed. According to the definition of $\gamma$, we
have the following formula:
\begin{eqnarray*}
I
&=&\int_G {\rm Tr}(u^{k_1})\ldots {\rm Tr}(u^{k_s})\,du\\
&=&\sum_{i_1\ldots i_k}\int_G(u_{i_1i_2}\ldots u_{i_ki_1})\ldots (u_{i_{k-k_s+1}i_{k-k_s+2}}\ldots u_{i_ki_{k-k_s+1}})\\
&=&\sum_{i_1\ldots i_k}\int_Gu_{i_1i_{\gamma(1)}}\ldots u_{i_ki_{\gamma(k)}}
\end{eqnarray*}

We use now the Weingarten formula from Theorem \ref{thm:Weingarten-formula}. We get:
\begin{eqnarray*}
I &=&\sum_{i_1\ldots i_k=1}^n\sum_{\substack{p,q\in D_k\\
p\leq \ker \ii, q\leq \ker \ii\circ \gamma}}
W_{kn}(p,q)\\
&=&\sum_{i_1\ldots i_k=1}^n\sum_{\substack{p,q\in D_k\\
p\leq \ker \ii, \gamma(q)\leq \ker \ii}}W_{kn}(p,q)\\
&=&\sum_{p,q\in D_k}n^{\vert p\vee\gamma(q)\vert}W_{kn}(p,q)\\
&=&\sum_{p,q\in D_k}n^{\vert p\vee\gamma(q)\vert} n^{\vert p\vee q\vert-\vert p\vert
-\vert q\vert}(1+O(1/n))
\end{eqnarray*}
The leading order of $n^{\vert p\vee\gamma(q)\vert+\vert p\vee q\vert-\vert p\vert -\vert
q\vert}$ is $n^0$, which is achieved if and only if $q\geq p$ and $p\geq \gamma(q)$, or
equivalently $p=q=\gamma(q)$. This gives the formula \eqref{eq:moments}.

In the classical case, instead of using the approximation for $W_{nk}(p,q)$, we can write
$n^{\vert p\vee\gamma(q)\vert}$ as $G_{nk}(\gamma(q),p)$. (Note that this only makes
sense if we know that $\gamma(q)$ is also an element in $D_k$; and this is only the case
for the classical partition lattices.) Then one can continue as follows:
$$I=
\sum_{p,q\in D_k} G_{nk}(\gamma(q),p) W_{kn}(p,q)= \sum_{q\in D_k}
\delta(\gamma(q),q)=\#\{q\in D_k|q=\gamma(p)\}.
$$
\end{proof}

We discuss now the computation of the asymptotic joint $*$-distribution of the variables
${\rm Tr}(u^k)$. Observe that this is of relevance only in the non-classical context, where
the variables ${\rm Tr}(u^k)$ are in general (for $k\geq 3$) not self-adjoint.

If $c$ is a cycle we use the notation $c^1=c$, and $c^*$= cycle opposite to $c$.

\begin{definition}
Associated to any $k_1,\ldots,k_s\in\mathbb N$ and any $e_1,\ldots,e_s\in\{1,*\}$ is the
\emph{trace permutation} $\gamma\in S_k$, with $k=\Sigma k_i$, having as cycles
$(1,\ldots,k_1)^{e_1}$, $(k_1+1,\ldots,k_1+k_2)^{e_2}$, \ldots,
$(k-k_s+1,\ldots,k)^{e_s}$.
\end{definition}

Observe that with $e_1,\ldots,e_s=1$ we recover the permutation in Definition
\ref{def:trace-permutation}. With this notation, we have the following slight
generalization of Theorem \ref{thm:moments}.

\begin{theorem}
Let $G$ be an easy quantum group. Consider $s\in\NN$, $k_1,\dots,k_s\in\NN$,
$e_1,\ldots,e_s\in\{1,*\}$, $k:=\sum_{i=1}^s k_i$, and denote by $\gamma\in S_k$ the
trace permutation associated to $k_1,\ldots,k_s$ and $e_1,\ldots,e_s$. Then we have, for
any $n$ such that $G_{nk}$ is invertible,
$$\int_G {\rm Tr}(u^{k_1})^{e_1}\ldots {\rm Tr}(u^{k_s})^{e_s}\,du=\#\{p\in
D_k|p=\gamma(p)\}+O(1/n).$$

If $G$ is a classical easy group, then this formula is valid without any lower order
corrections.
\end{theorem}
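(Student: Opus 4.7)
The plan is to reduce this statement to Theorem \ref{thm:moments} by showing that the formula
$$\int_G {\rm Tr}(u^{k_1})^{e_1}\ldots {\rm Tr}(u^{k_s})^{e_s}\,du = \sum_{i_1,\ldots,i_k=1}^n \int_G u_{i_1 i_{\gamma(1)}}\ldots u_{i_k i_{\gamma(k)}}\,du$$
still holds, with $\gamma$ the trace permutation associated to the $k_i$ and $e_i$ (now allowing cycles to be oriented in either direction). Once this identity is established, the proof proceeds by applying Theorem \ref{thm:Weingarten-formula} and repeating the leading-order analysis of Theorem \ref{thm:moments} verbatim.

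First I would handle the adjoints. Because $G\subset O_n^+$ is orthogonal, the coefficients satisfy $u_{ij}^*=u_{ij}$, so for a single factor,
$${\rm Tr}(u^{k_i})^* = \sum_{j_1,\ldots,j_{k_i}} (u_{j_1 j_2} u_{j_2 j_3}\ldots u_{j_{k_i} j_1})^* = \sum_{j_1,\ldots,j_{k_i}} u_{j_{k_i} j_1} u_{j_{k_i-1} j_{k_i}}\ldots u_{j_1 j_2},$$
i.e. the same sum as ${\rm Tr}(u^{k_i})$ but with the cyclic product read in the opposite direction. Re-indexing, this means an $e_i=*$ factor contributes a cycle $(k_1+\cdots+k_{i-1}+1,\ldots,k_1+\cdots+k_i)^*$ to the trace permutation, exactly matching the definition of $\gamma$ given before the statement. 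Multiplying the $s$ factors and expanding gives the displayed expansion, with $\gamma$ the enlarged trace permutation.

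Next, I would substitute the Weingarten formula and argue as in Theorem~\ref{thm:moments}. That is, one rewrites the sum over multi-indices as
$$\sum_{p,q\in D_k} n^{|p\vee\gamma(q)|}\,W_{kn}(p,q),$$
using that $q\leq\ker\ii\circ\gamma$ is equivalent to $\gamma(q)\leq\ker\ii$, and that summing over $\ii$ with $p,\gamma(q)\leq\ker\ii$ yields a factor $n^{|p\vee\gamma(q)|}$. Inserting the standard asymptotic $W_{kn}(p,q)=n^{|p\vee q|-|p|-|q|}(1+O(1/n))$ shows that the exponent $|p\vee\gamma(q)|+|p\vee q|-|p|-|q|\leq 0$, with equality iff $p=q=\gamma(q)$. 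This produces the count $\#\{p\in D_k \mid p=\gamma(p)\}$ up to $O(1/n)$.

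The only conceptual point, which I would flag but expect to be routine, is verifying that the permutation $\gamma$ acts on partitions of $\{1,\ldots,k\}$ in a way compatible with the preceding manipulations: since $\gamma$ is a permutation of $\{1,\ldots,k\}$ regardless of the orientation of its cycles, the action $q\mapsto\gamma(q)$ on partitions is defined exactly as in Theorem \ref{thm:moments}, so no new phenomenon enters. Finally, for classical $G$, the partition lattices $D_k$ are closed under the action of $S_k$, so one can replace $n^{|p\vee\gamma(q)|}$ by $G_{nk}(\gamma(q),p)$ and use $G_{nk}W_{kn}=\mathrm{id}$ to collapse the sum to $\#\{q\in D_k\mid q=\gamma(q)\}$ with no error term, exactly as in the proof of Theorem \ref{thm:moments}. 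The main (and only) obstacle is the bookkeeping needed to confirm that reversing some of the cycles of $\gamma$ changes nothing in the Weingarten estimation, which is clear once the initial expansion is verified.
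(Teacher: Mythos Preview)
Your proposal is correct and is exactly the approach the paper takes: its proof consists of the single sentence ``This is similar to the proof of Theorem~\ref{thm:moments}.'' You have in fact supplied more detail than the paper, correctly identifying that the only new ingredient is the self-adjointness $u_{ij}^*=u_{ij}$ in the orthogonal setting, which turns ${\rm Tr}(u^{k_i})^*$ into the same cyclic sum with the cycle reversed, after which the Weingarten computation and the classical refinement go through verbatim.
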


\begin{proof}
This is similar to the proof of Theorem \ref{thm:moments}.
\end{proof}

\section{Cumulants of powers}

The formula for the moments of the variables ${\rm Tr}(u^k)$ contains in principle all
information about their distribution. However, in order to specify this more explicitly,
in particular, to recognize independence/freeness between those (or suitable
modifications), it is more advantageous to look on the cumulants of these variables. For
this we restrict, in this section, to the classical and free case. We will calculate the
classical cumulants (denoted by $c_r$) for the classical easy groups and the free
cumulants (denoted by $\kappa_r$) for the free easy groups. Actually we will restrict to
the cases
\begin{enumerate}
\item Classical groups: $O_n,S_n,H_n,B_n$.

\item Free quantum groups: $O_n^+,S_n^+,H_n^+,B_n^+$.
\end{enumerate}

The reason for this is that we need some kind of multiplicativity for the underlying
partition lattice in our calculations, as specified in the next proposition.

\begin{proposition}\label{prop:true}
Assume that $G$ is one of the easy quantum groups $O_n,S_n,H_n,B_n$ or
$O_n^+,S_n^+,H_n^+,B_n^+$ and denote by $D_k$ the corresponding category of partitions.
Then we have the following property: let $p\in D_k$ be a partition, and let $q\in P_l$
with $l\leq k$ be a partition arising from $p$ by deleting some blocks. Then $b\in D_l$.
\end{proposition}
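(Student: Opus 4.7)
The plan is a case check across the eight categories in question, exploiting the fact that each of them is cut out of $P_k$ (or $NC_k$) by conditions that are local to the individual blocks, plus, in the free case, the global but monotone condition of noncrossingness. Let me first fix the meaning of ``arising from $p$ by deleting some blocks'': we discard a subcollection of blocks and relabel the remaining $l$ legs by $\{1,\dots,l\}$ via the unique order-preserving bijection; the surviving blocks of $p$ are then in canonical, size-preserving correspondence with the blocks of $q$.

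For the four classical categories, I would observe that each of $P_o,P_s,P_h,P_b$ is defined by a condition of the form ``every block belongs to a prescribed list of allowed block-types'' (pair only; no restriction; even size; singleton or pair). Since the blocks of $q$ are obtained from a subset of the blocks of $p$ without altering their cardinalities, the condition is inherited automatically, and this settles the statement for $O_n, S_n, H_n, B_n$.

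For the four free categories $NC_o, NC_s, NC_h, NC_b$, the same block-type check takes care of the size condition, and what remains is to verify that noncrossingness survives block deletion. The key point is that the ``no crossing'' condition is a statement about \emph{pairs} of blocks, and the order-preserving identification between the surviving legs and $\{1,\dots,l\}$ preserves the relative order of any four points; hence two blocks of $q$ cross in $\{1,\dots,l\}$ if and only if the corresponding two blocks of $p$ cross in $\{1,\dots,k\}$, which is excluded by assumption. Combined with the block-type inheritance this completes the argument.

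I do not expect a genuine obstacle; the only care-worthy point is to nail down precisely the ``deleting blocks and renumbering'' operation, since the proposition really relies on the fact that the defining conditions of the eight chosen categories are purely block-local (in the classical case) or block-local plus noncrossing (in the free case). It is worth remarking, parenthetically, why the modified and half-liberated examples are excluded: the categories $P_{s'},P_{b'}$ and $E_o,E_h$ are defined by conditions involving the parity of the total number of legs, or the parity of the positions of legs within blocks, and these conditions are genuinely non-local and can be destroyed by deletion, as is the analogous $s$-balancedness of Theorem \ref{thm:hyper-series}.
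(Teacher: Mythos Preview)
Your proposal is correct and follows exactly the approach the paper takes: the paper's proof is a single sentence invoking the explicit descriptions of the eight categories from Section~1, and you have simply spelled out the routine verification that those descriptions are block-local (plus noncrossing in the free case) and hence stable under block deletion. Your aside on why the primed and half-liberated categories fail is also accurate and matches the paper's later remarks.
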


\begin{proof}
This follows from the explicit description of the full categories of partitions for the
various easy quantum groups, given in section 1.
\end{proof}

\begin{theorem}\label{thm:cumulants}
1) Let $G$ be one of the easy classical groups $O_n,S_n,H_n,B_n$ with $D_k$ as
corresponding category of partitions. Consider $r\in \mathbb N$, $k_1,\dots,k_r\in
\mathbb N$, $k:=\sum_{i=1}^r k_i$ and $e_1,\dots,e_r\in\{1,*\}$, and denote by $\gamma\in
S_k$ the trace permutation associated to $k_1,\ldots,k_r$ and $e_1,\dots,e_r$. Then we
have, for any $n$ such that $G_{nk}$ is invertible, the classical cumulants
$$c_r({\rm Tr}(u^{k_1})^{e_1},\ldots,{\rm Tr}(u^{k_r})^{e_r})
=\#\{p\in D_k|p\vee\gamma=1_k,\,p=\gamma(p)\}.$$

2) Let $G$ be one of the easy free groups $O_n^+,S_n^+,H_n^+,B_n^+$ with $D_k$ as
corresponding category of non-crossing partitions. Consider $r\in \mathbb N$,
$k_1,\dots,k_r\in \mathbb N$, $k:=\sum_{i=1}^r k_i$ and $e_1,\dots,e_r\in\{1,*\}$, and
denote by $\gamma\in S_k$ the trace permutation associated to $k_1,\ldots,k_r$ and
$e_1,\dots,e_r$. Then we have, for any $n$ such that $G_{nk}$ is invertible, the free
cumulants
$$\kappa_r({\rm Tr}(u^{k_1})^{e_1},\ldots,{\rm Tr}(u^{k_r})^{e_r})
=\#\{p\in D_k|p\vee\gamma=1_k,\,p=\gamma(p)\}+O(1/n).$$
\end{theorem}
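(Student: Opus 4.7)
My plan is to deduce both cumulant formulas from the moment formula of Theorem 2.4 (in its $*$-version) via the classical, resp.\ free, moment-cumulant inversion. Write $\varphi$ for the Haar state and $X_i:={\rm Tr}(u^{k_i})^{e_i}$. By uniqueness of the inversion, it suffices to produce a factorization
$$\varphi(X_1\cdots X_r)=\sum_{\sigma}\prod_{V\in\sigma}C_V\bigl((k_i)_{i\in V},(e_i)_{i\in V}\bigr),$$
where $\sigma$ runs over $P_r$ in the classical case and over $NC_r$ in the free case, with block-contribution
$$C_V=\#\{p_V\in D_{k_V}:p_V\vee\gamma_V=1_{k_V},\ p_V=\gamma_V(p_V)\},$$
$k_V:=\sum_{i\in V}k_i$, and $\gamma_V$ the trace permutation associated to $(k_i)_{i\in V}$ and $(e_i)_{i\in V}$. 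Uniqueness then forces $c_r=C_{\{1,\ldots,r\}}$ (resp.\ $\kappa_r=C_{\{1,\ldots,r\}}$), which is the claim.

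To produce this factorization starting from $\varphi(X_1\cdots X_r)=\#\{p\in D_k:p=\gamma(p)\}+O(1/n)$, I would classify the $\gamma$-invariant partitions $p\in D_k$ by $\sigma:=p\vee\gamma$, viewing $\gamma$ as the partition of $\{1,\ldots,k\}$ whose blocks are its cycles $C_1,\ldots,C_r$. Since $\sigma\geq\gamma$, each block of $\sigma$ has the form $\bigcup_{i\in V}C_i$, so $\sigma$ descends to a partition on $\{1,\ldots,r\}$. The restriction $p_V$ of $p$ to $\bigcup_{i\in V}C_i$ lies in $D_{k_V}$ by Proposition 3.1; the very definition of the join forces $p_V\vee\gamma_V=1_{k_V}$; and $\gamma$-invariance of $p$ restricts to $\gamma_V$-invariance of $p_V$. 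Conversely, any compatible family $(p_V)_V$ reassembles to a unique such $p$, so this correspondence is a bijection and delivers the desired factorization.

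In the free case, I must additionally check that the induced $\sigma$ on $\{1,\ldots,r\}$ is noncrossing, so that the expansion genuinely runs over $NC_r$ and the free moment-cumulant formula applies. This holds because $p\in D_k\subset NC_k$ and the cycles $C_1,\ldots,C_r$ are consecutive intervals of $\{1,\ldots,k\}$: any crossing in $\sigma$ at the $r$-cycle level would lift to a crossing in $p$. The $O(1/n)$ remainder from Theorem 2.4 passes unchanged through the finite Möbius inversion and yields the stated error in the free case; classically Theorem 2.4 is exact, so the resulting formula is too.

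The principal obstacle is the combinatorial bookkeeping for the bijection: verifying that Proposition 3.1 really does apply uniformly block-by-block (including in the $*$-marked setting, where $\gamma$ has reversed cycles), and, in the free case, that projection from partitions of $\{1,\ldots,k\}$ to partitions of $\{1,\ldots,r\}$ preserves noncrossingness. With these in hand, the theorem reduces to a routine application of classical, resp.\ free, moment-cumulant inversion.
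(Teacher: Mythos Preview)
Your proposal is correct and follows essentially the same route as the paper: both arguments start from the moment formula of Theorem~2.3/2.5, invoke Proposition~3.1 to pass between block-wise and global descriptions of the contributing partitions, and then extract the cumulant via the moment--cumulant relation. The only difference is packaging. The paper performs the M\"obius inversion explicitly: it writes $c_r=\sum_{\sigma\in P_r}\mu(\sigma,1_r)\,\#\{p\in D_\sigma:p=\gamma(p)\}$, uses Proposition~3.1 to identify $D_\sigma=\{p\in D_k:p\le\sigma^\gamma\}$, lifts $\sigma$ to $\tau=\sigma^\gamma$ (so that $\mu(\sigma,1_r)=\mu(\tau,1_k)$), exchanges the two sums, and collapses $\sum_{\tau\ge p\vee\gamma}\mu(\tau,1_k)$ to the indicator of $p\vee\gamma=1_k$. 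You instead sort the $\gamma$-invariant $p\in D_k$ according to $\sigma=p\vee\gamma$, set up the bijection $p\leftrightarrow(p_V)_{V\in\sigma}$, and invoke uniqueness of the moment--cumulant expansion; this bijection is precisely the combinatorial content of the M\"obius identity the paper uses. Your explicit check that the induced $\sigma$ is noncrossing in the free case is the one point the paper leaves implicit (it simply sums over $\sigma\in NC_r$ from the start), so your treatment is if anything slightly more complete there.
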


\begin{proof}
1) Let us denote by $c_r$ the considered cumulant. We write
$$D_\sigma:=\{p\in P_k\mid p\vert_v \in D_{\vert v\vert}\, \forall v\in\sigma\}$$
for those partitions $p$ in $P_k$ such that the restriction of $p$ to a block of $\sigma$
is an element in the corresponding set $D_{\vert v\vert}$. Clearly, one has that a $p\in
D_\sigma$ is in $D_k$ and must satisfy $p\leq \sigma^\gamma$. Our exclusion of the primed
classical groups guarantees, by Proposition \ref{prop:true}, that this is actually a
characterization, i.e., we have
\begin{equation}\label{eq:D-sigma}
D_\sigma=\{p\in D_k\mid p\leq \sigma^\gamma\}.
\end{equation}

Then, by the definition of the classical cumulants via M\"obius inversion of the moments,
we get from \eqref{eq:moments}:
\begin{align*}
c_r&=\sum_{\sigma\in P(r)}\mu(\sigma,1_r) \cdot \#\{p\in D_\sigma:
p=\gamma(p)\}\\
&=\sum_{\sigma\in P(r)}\mu(\sigma,1_r) \cdot \#\{p\in D_k: p\leq \sigma^\gamma,
p=\gamma(p)\}\\
&=\sum_{\sigma\in P(r)}\mu(\sigma,1_r) \sum_{\substack{p\in D_k\\p\leq \sigma^\gamma,
p=\gamma(p)}}1
\end{align*}
In order to exchange the two summations, we first have to replace the summation over
$\sigma\in P(r)$ by a summation over $\tau:=\sigma^\gamma\in P(k)$. Note that the
condition on the latter is exactly  $\tau\geq \gamma$ and that we have
$\mu(\sigma,1_r)=\mu(\sigma^\gamma,1_k)$. Thus:
$$
c_r =\sum_{\substack{\tau\in P(k)\\ \tau\geq\gamma}}\mu(\tau,1_k) \sum_{\substack{p\in
D_k\\p\leq \tau, p=\gamma(p)}}1=\sum_{\substack{p\in D_k\\
p=\gamma(p)}}\sum_{\substack{\tau\in P(k)\\ p\vee \gamma\leq \tau}}\mu(\tau,1_k)
$$
The definition of the M\"obius function (see (10.11) in \cite{nsp}) gives for the second
summation
$$\sum_{\substack{\tau\in P(k)\\ p\vee \gamma\leq
\tau}}\mu(\tau,1_k)=\begin{cases} 1,& p\vee \gamma=1_k\\0,&\text{otherwise}
\end{cases}$$
and the assertion follows.

2) In the free case, the proof runs in the same way, by using free cumulants and the
corresponding M\"obius function on non-crossing partitions. Note that we have the
analogue of \eqref{eq:D-sigma} in this case only for non-crossing $\sigma$.
\end{proof}

\section{The orthogonal case}

In this section we discuss what Theorem \ref{thm:cumulants} implies for the asymptotic
distribution of traces in the case of the orthogonal quantum groups. For the classical
orthogonal group we will in this way recover the theorem of Diaconis and Shahshahani
\cite{dsh}.

\begin{theorem}\label{thm:orthogonal}
The variables $u_k=\lim_{n\to\infty}{\rm Tr}(u^k)$ are as follows:
\begin{enumerate}
\item For $O_n$, the $u_k$ are real Gaussian variables, with variance $k$
and mean $0$ or $1$, depending on whether $k$ is odd or even. The $u_k$'s are independent.

\item For $O_n^+$, at $k=1,2$ we get semicircular variables of  variance $1$
and mean $0$ for $u_1$ and mean $1$ for $u_2$, and at $k\geq 3$ we get circular
variables of mean $0$ and covariance $1$. The $u_k$'s are $*$-free.
\end{enumerate}
\end{theorem}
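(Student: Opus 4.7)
The plan is to apply Theorem~\ref{thm:cumulants} to the categories $D_k = P_2(k)$ (for $O_n$) and $D_k = NC_2(k)$ (for $O_n^+$). In both cases the classical, respectively free, cumulant of $\Tr(u^{k_1})^{e_1},\ldots,\Tr(u^{k_r})^{e_r}$ equals the number of $\gamma$-invariant pairings $p \in D_k$ with $p \vee \gamma = 1_k$; for $O_n^+$ we then let $n \to \infty$ to absorb the $O(1/n)$ remainder.

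The combinatorial core of the argument is a short structural observation: a pairing $p$ invariant under a permutation $\gamma$ is encoded by a fixed-point-free involution $\sigma$ on $\{1,\dots,k\}$ satisfying $\sigma\gamma = \gamma\sigma$. Consequently $\sigma$ permutes the cycles $C_1,\dots,C_r$ of $\gamma$ via an involution $f$ on $\{1,\dots,r\}$, with $\vert C_a\vert = \vert C_{f(a)}\vert$ for every $a$, and the blocks of $p \vee \gamma$ are exactly the unions of $\gamma$-cycles belonging to a single $f$-orbit. Since $f$-orbits have size at most $2$, the equation $p \vee \gamma = 1_k$ forces $r \leq 2$; hence every cumulant of order $r \geq 3$ vanishes, whether one counts inside $P_2$ or $NC_2$. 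This already yields joint Gaussianity for $O_n$ and reduces the $O_n^+$ analysis to the first two free cumulants.

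It then remains to evaluate the $r = 1$ and $r = 2$ contributions. For $r = 1$ one counts pairings of $\{1,\dots,k\}$ invariant under the full $k$-cycle: in $P_2$ this is the unique ``diameter'' pairing (pairing $i$ with $i + k/2$) when $k$ is even, and nothing otherwise; in $NC_2$ the diameter pairing is noncrossing only for $k = 2$. Hence $c_1(\Tr(u^k))$ equals $1$ or $0$ according to the parity of $k$, and $\kappa_1(u_k) = \delta_{k,2}$. For $r = 2$ the structural observation forces $\vert C_1\vert = \vert C_2\vert$, so $k_1 = k_2 =: k$ (otherwise the cumulant vanishes, yielding independence in the classical case and freeness across distinct $k$'s in the free case); the admissible pairings then correspond to bijections $C_1 \to C_2$ intertwining the cycle-rotations, whose direction is determined by $e_1$ and $e_2$. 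In $P_2$ there are exactly $k$ such bijections, producing Gaussians of variance $k$. In $NC_2$ one checks that the unique noncrossing complete bipartite matching of $C_1$ with $C_2$ is the ``rainbow'' $i \leftrightarrow 2k+1-i$, and a direct computation shows that the rainbow is $\gamma$-invariant precisely when the two cycle-rotations run in opposite directions (the mixed-sign cases $(e_1,e_2)\in\{(1,*),(*,1)\}$), or when $k \in \{1,2\}$ (where each rotation is self-inverse, so direction is immaterial). This yields $\kappa_2(u_k, u_k^*) = 1$ for all $k \geq 1$ and $\kappa_2(u_k, u_k) = 1$ iff $k \in \{1,2\}$, which, combined with the vanishing of mixed and higher cumulants, encodes precisely the semicircular ($k \in \{1,2\}$)/circular ($k \geq 3$) dichotomy and $*$-freeness.

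The main obstacle is the noncrossing case-analysis in the last step: one must establish that, among the $k$ algebraically admissible $\gamma$-equivariant bijections $C_1 \to C_2$, only the rainbow is noncrossing, and then determine for which sign pattern the rainbow is actually $\gamma$-invariant. Everything else follows cleanly from the observation that a $\gamma$-invariant pairing descends to an involution on the set of cycles of $\gamma$.
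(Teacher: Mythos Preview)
Your proposal is correct and follows essentially the same approach as the paper's own proof: both apply Theorem~\ref{thm:cumulants}, observe that a $\gamma$-invariant pairing can connect at most two cycles of $\gamma$ (forcing all cumulants of order $\geq 3$ to vanish), and then count the $r=1$ (diameter pairing) and $r=2$ (rotation-equivariant bijections $C_1\to C_2$, with the rainbow as the unique noncrossing one) contributions to obtain the stated means, variances, and the semicircular/circular dichotomy. Your packaging via the commuting fixed-point-free involution $\sigma$ and the induced involution $f$ on the cycle set is a slightly cleaner formalization of what the paper argues more informally, but the substance of the two arguments is identical.
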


\begin{proof}
(1) In this case $D_k$ consists of all pairings of $k$ elements. We have to count all
pairings $p$ with the properties that $p\vee\gamma=1_k$ and $p=\gamma(p)$.

Note that if $p$ connects two different cycles of $\gamma$, say $c_i$ and $c_j$, then the
property $p=\gamma(p)$ implies that each element from $c_i$ must be paired with an
element from $c_j$; thus those cycles cannot be connected to other cycles and they must
contain the same number of elements. This means that for $s\geq 3$ there are no $p$ with
the required properties. Thus all cumulants of order 3 and higher vanish asymptotically
and all traces are asymptotically Gaussian.

Since in the case $s=2$ we only have permissible pairings if the two cycles have the same
number of elements, i.e., both powers of $u$ are the same, we also see that the
covariance between traces of different powers vanishes and thus different powers are
asymptotically independent. The variance of $u_k$ is given by the number of matchings
between $\{1,\dots,k\}$ and $\{k+1,\dots,2k\}$ which are invariant under rotations. Since
such a matching is determined by the partner of the first element 1, for which we have
$k$ possibilities, the variance of $u_k$ is $k$. For the mean, if $k$ is odd there is
clearly no pairing at all, and if $k=2p$ is even then the only pairing of
$\{1,\dots,2p\}$ which is invariant under rotations is $(1,p+1),(2,p+2),\dots,(p,2p)$.
Thus the mean of $u_k$ is zero if $k$ is odd and 1 if $k$ is even.

(2) In the quantum case $D_k$ consists of non-crossing pairings. We can essentially repeat the arguments from above but have to take care that only non-crossing pairings are counted. We also have to realize that for $k\geq 3$, the $u_k$ are not selfadjoint any longer, thus we have to consider also $u_k^*$ in these cases. This means that in our arguments we have to allow cycles which are rotated ``backwards'' under $\gamma$.

By the same reasoning as before we see that free cumulants of order three and higher vanish. Thus we get a (semi)circular family. The pairing which gave mean 1 in the classical case is only in the case $k=2$ a non-crossing one, thus the mean of $u_2$ is 1, all other means are zero. For the variances, one has again that different powers allow no pairings at all and are asymptotically $*$-free. For the matchings between $\{1,\dots,k\}$ and $\{k+1,\dots,2k\}$ one has to observe that there is only one non-crossing possibility, namely $(1,2k),(2,2k-1),\dots,(k,k+1)$ and this satisfies $p=\gamma(p)$ only if $\gamma$ rotates both cycles in different directions.

For $k=1$ and $k=2$ there is no difference between both directions, but for $k\geq 3$
this implies that we get only a non-vanishing covariance between $u_k$ and $u_k^*$ (with
value 1). This shows that $u_1$ and $u_2$ are semicircular, whereas the higher $u_k$ are
circular.
\end{proof}

\section{The bistochastic case}

In the bistochastic case we have the following version of Theorem \ref{thm:orthogonal}.

\begin{theorem}
The variables $u_k=\lim_{n\to\infty}{\rm Tr}(u^k)$ are as follows:
\begin{enumerate}
\item For $B_n$, the $u_k$ are real Gaussian variables, with variance $k$ and
mean $1$ or $2$, depending on whether $k$ is odd or even. The $u_k$'s are independent.

\item For $B_n^+$, at $k=1,2$ we get semicircular variables of  variance $1$
and mean $1$ for $u_1$ and mean $2$ for $u_2$, and at $k\geq 3$ we get circular variables
of mean $1$ and covariance $1$. The $u_k$'s are $*$-free.
\end{enumerate}
\end{theorem}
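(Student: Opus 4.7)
The plan is to mimic the proof of Theorem \ref{thm:orthogonal}, applying the cumulant formula of Theorem \ref{thm:cumulants} to the bistochastic categories $P_b$ and $NC_b$, which consist of singletons together with (noncrossing) pairings. The only new ingredient compared to the orthogonal case is the bookkeeping of the singletons.

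The key structural observation is that, because each cycle of $\gamma$ is a single $\gamma$-orbit, any $\gamma$-invariant $p \in D_k$ sorts each cycle of $\gamma$ into one of three mutually exclusive types: (a) all its positions are singletons of $p$; (b) all its positions are paired \emph{within} the cycle; or (c) all its positions are paired with the positions of exactly one other cycle of $\gamma$, which must have the same length, via a $\gamma$-equivariant bijection. Indeed, a singleton propagates under $\gamma$ to an entire cycle of singletons, and once one element of a cycle is paired, the pattern of its partners throughout the cycle is forced by the invariance $p = \gamma(p)$.

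For cumulants of order $r \geq 3$, the requirement $p \vee \gamma = 1_k$ forces $p$ to connect all $r$ cycles of $\gamma$, but types (a) and (b) leave cycles isolated while type (c) links only pairs of cycles; hence no such $p$ exists and all higher cumulants vanish asymptotically. This yields a Gaussian family for $B_n$ and a semicircular/circular family for $B_n^+$. For $r = 2$, only type (c) achieves the required connectivity: this forces equal cycle lengths, and the counting reduces verbatim to the orthogonal case, giving covariance $k$ classically and covariance $1$ in the noncrossing case, with the same orientation argument as in Theorem \ref{thm:orthogonal} showing that $u_k$ is semicircular for $k \in \{1,2\}$, circular for $k \geq 3$, and that distinct $u_k$'s are asymptotically independent, respectively $*$-free.

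For $r = 1$ the condition $p \vee \gamma = 1_k$ is automatic, and $p \in D_k$ is either the all-singletons partition on the single $\gamma$-cycle of length $k$, contributing $1$, or a $\gamma$-invariant perfect matching, whose count was already determined in the proof of Theorem \ref{thm:orthogonal}: it equals $1$ iff $k$ is even in the classical case, and iff $k = 2$ in the noncrossing case. Hence the mean of $u_k$ is $1$ when $k$ is odd and $2$ when $k$ is even for $B_n$, while for $B_n^+$ it equals $2$ at $k = 2$ and $1$ otherwise. The only mildly subtle step is justifying the trichotomy above, which rests on the transitivity of $\gamma$ on each of its cycles; once this is in hand the entire count is an additive perturbation (by a single all-singletons contribution) of the orthogonal calculation, which is exactly what shifts each mean by $+1$ while leaving the variance and higher cumulants unchanged.
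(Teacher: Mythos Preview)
Your proof is correct and follows essentially the same approach as the paper's: both reduce the bistochastic case to the orthogonal one by observing that $\gamma$-invariance forces singletons to fill an entire cycle, so singletons can contribute only when $r=1$, yielding a uniform shift of the mean by $+1$ while leaving all higher cumulants unchanged. Your explicit trichotomy (a)/(b)/(c) is a slightly more detailed articulation of the same structural observation the paper makes in one sentence.
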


\begin{proof}
When replacing $O_n$ and $O_n^+$ by $B_n$ and $B_n^+$, we also have to allow singletons
in $p$. Note however that the condition $p=\gamma(p)$ implies that if $p$ has a
singleton, then the corresponding cycle of $\gamma$ must consist only of singletons of
$p$ , which means in particular that this cycle cannot be connected via $p$ to other
cycles. Thus singletons are not allowed for permissible $p$, unless we only have one
cycle of $\gamma$, i.e., we are looking on the mean. In this case there is one additional
$p$, consisting just of singletons, which makes a contribution. So the results for $B_n$
and $B_n^+$ are the same as those for $O_n$ and $O_n^+$, respectively, with the only
exception that all means are shifted by 1.
\end{proof}

\section{The symmetric case}

Let us now consider the case of the symmetric groups. In this case we have to consider
all partitions instead of just pairings and the arguments are getting a bit more
involved. Nevertheless one can treat these cases still in a quite straightforward way.
For the classical permutation groups, one recovers in this way the corresponding result
of Diaconis and Shahshahani \cite{dsh}.

\begin{proposition}\label{prop:S-n}
The cumulants of $u_k=\lim_{n\to\infty}{\rm Tr}(u^k)$ are as follows:
\begin{enumerate}
\item For $S_n$, the classical cumulants are given by:
$$c_r(u_{k_1},\dots ,u_{k_r})=\sum_{q\mid k_i \forall i=1,\dots,r} q^{r-1}$$

\item For $S_n^+$, the free cumulants are given by:
$$c_r(u_{k_1}^{e_1},\dots ,u_{k_r}^{e_r})=\begin{cases}
2,& r=1,\, k_1\geq 2\\
2,& r=2,\, k_1=k_2, \, e_1=e_2^*\\
2,& r=2,\, k_1=k_2=2\\
1,& \text{otherwise.}
\end{cases}$$
\end{enumerate}
\end{proposition}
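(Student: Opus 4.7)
The plan is to apply Theorem~\ref{thm:cumulants} directly: for part (1) with $D_k=P_k$ and for part (2) with $D_k=NC_k$. This reduces both claims to the combinatorial problem of counting partitions $p\in D_k$ satisfying $p=\gamma(p)$ and $p\vee\gamma=1_k$, where $\gamma$ is the trace permutation.

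\textbf{Part (1).} First I would analyze the local structure: within each cycle $c_i$ of $\gamma$, the restriction $p|_{c_i}$ must be invariant under the $k_i$-cycle $\gamma|_{c_i}$, which forces it to consist of $d_i$ equal blocks (each an arithmetic progression mod $d_i$) for some divisor $d_i\mid k_i$. Next I would show that the condition $p\vee\gamma=1_k$ forces a common period $d_1=\cdots=d_r=q$ with $q\mid\gcd(k_1,\ldots,k_r)$: if a global block $B$ meets cycles $c_i$ and $c_j$, its $\gamma$-orbit has size $\mathrm{lcm}(d_i,d_j)$, and the disjointness of global blocks together with a count of how many orbit elements touch each local block force $\mathrm{lcm}(d_i,d_j)=d_i=d_j$. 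Given such a common $q$, the partition is determined by a phase vector $(a_1,\ldots,a_r)\in(\ZZ/q\ZZ)^r$ specifying the alignment of local blocks across cycles, modulo the diagonal shift that merely relabels the $q$ resulting global blocks; this yields $q^{r-1}$ partitions, and summing over admissible $q$ gives $\sum_{q\mid k_i\ \forall i}q^{r-1}$.

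\textbf{Part (2).} Here the non-crossing constraint imposes a sharper local rigidity: for each cycle $c_i$ with $k_i\geq 2$, the only non-crossing $\gamma|_{c_i}$-invariant partitions of the interval $c_i$ are the trivial partition $1_{c_i}$ and the discrete partition $0_{c_i}$, since any intermediate block $B$ with $1<|B|<k_i$ would cross its own shift $\gamma(B)$ in $ABAB$ pattern. This reduces the count to a case split on which cycles are trivial (T) and which are discrete (D). The case $r=1$ is immediate. For $r\geq 2$, the all-T configuration contributes exactly one partition (namely $1_k$), because a global non-crossing partition of $\{1,\ldots,r\}$ with $p\vee\gamma=1_k$ must be $1_r$; mixed T/D configurations contribute zero, because $\gamma$-invariance of a block containing a full trivial cycle forces either isolation (failing $p\vee\gamma=1_k$) or full absorption of adjacent discrete cycles (contradicting discreteness); and in the all-D configuration the blocks are singletons or multi-transversals with at most one element per touched cycle, whose orbit size $\mathrm{lcm}(k_{i_1},\ldots,k_{i_m})$ must equal each touched $k_{i_j}$ by disjointness, forcing all touched cycle lengths to agree. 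In the sub-case $r=2,\ k_1=k_2=k$, the unique candidate is the fully nested pair orbit $\{1,2k\},\{2,2k-1\},\ldots,\{k,k+1\}$, which is non-crossing precisely when $e_1=e_2^*$ or $k=2$; this accounts for the extra $1$ in the first two branches of the piecewise formula.

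\textbf{Main obstacle.} The most delicate step is ruling out all non-trivial partitions in the all-D configuration for $r\geq 3$. My approach would be to show that any single transversal orbit covering $m\geq 3$ equal-length cycles must have consecutive orbit blocks with alternating positions in at least two cycles, giving an $ABAB$ crossing; and that no multi-orbit assembly (pair orbits between several cycle pairs, plus singletons) can simultaneously avoid element reuse, preserve non-crossing, maintain $\gamma$-invariance, and connect all $r$ cycles. Once these are established, only $p=1_k$ remains, yielding the $1$ in the ``otherwise'' branch and completing the piecewise formula.
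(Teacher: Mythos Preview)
Your approach is essentially the paper's: apply Theorem~\ref{thm:cumulants} and count the $\gamma$-invariant $p\in D_k$ with $p\vee\gamma=1_k$; your part (1) matches the paper's argument closely.

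In part (2) you are making the endgame harder than it is. The exhaustion fact implicit in your own part (1) analysis---the $\gamma$-orbit of any block $B$ covers \emph{all} of every cycle that $B$ touches---already forces, under $p\vee\gamma=1_k$, that every block meets every cycle: otherwise the touched-cycle sets of different orbits would give a nontrivial decomposition of $\{1,\ldots,k\}$ into $\gamma$- and $p$-invariant pieces. Hence in your all-D configuration each block is automatically a full transversal, and the ``multi-orbit assemblies'' you flag as the main obstacle simply do not exist. The paper proceeds exactly this way, reusing the part (1) classification: only $q=1$ (the single block $1_k$) or $q=k_1=\cdots=k_r$ (one point per cycle in each block) can be non-crossing, and for $r\geq 3$ the latter is excluded in two lines---if $B_1\ni 1$, any second block has its $c_1$-element strictly between $1$ and $B_1$'s $c_2$-element, and its $c_3$-element strictly to the right of that, yielding $B_1B_2B_1B_2$.

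One small slip: in your $r=2$ discussion the nested pairing $\{1,2k\},\ldots,\{k,k+1\}$ is \emph{always} non-crossing; the condition $e_1=e_2^*$ or $k=2$ is what is needed for $\gamma$-invariance.
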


\begin{proof}
(1) Now $D_k$ consists of all partitions. We have to count partitions $p$ which have the
properties that $p\vee\gamma=1_k$ and $p=\gamma(p)$.

Consider a partition $p$ which connects different cycles of $\gamma$. Consider the
restriction of $p$ to one cycle. Let $k$ be the number of elements in this cycle and $t$
be the number of the points in the restriction. Then the orbit of those $t$ points under
$\gamma$ must give a partition of that cycle; this means that $t$ is a divisor of $k$ and
that the $t$ points are equally spaced. The same must be true for all cycles of $\gamma$
which are connected via $p$, and the ratio between $t$ and $k$ is the same for all those
cycles. This means that if one block of $p$ connects some cycles then the orbit under
$\gamma$ of this block connects exactly those cycles and exhausts all points of those
cycles. So if we want to connect all cycles of $\gamma$ then this can only happen in the
way that we have one (and thus all) block of $p$ intersecting each of the cycles of
$\gamma$. To be more precise, let us consider $c_r(u_{k_1},\dots ,u_{k_r})$. We have then
to look for a common divisor $q$ of all $k_1,\dots,k_r$; a contributing $p$ is then one
the blocks of which are of the following form: $k_1/q$ points in the first cycle (equally
spaced), ... $k_r/q$ points in the last cycle (equally spaced). We can specify this by
saying to which points in the other cycles the first point in the first cycle is
connected. There are $q^{r-1}$ possibilities for such choices. Thus:
$$c_r(u_{k_1},\dots ,u_{k_r})=\sum_{q\mid k_i \forall i=1,\dots,r} q^{r-1}$$

(2) In the quantum permutation case we have to consider non-crossing partitions instead
of all partitions. Most of the contributing partitions from the classical case are
crossing, so do not count for the quantum case. Actually, whenever a restriction of a
block to one cycle has two or more elements then the corresponding partition is crossing,
unless the restriction exhausts the whole group. This is the case $q=1$ from the
considerations above (corresponding to the partition which has only one block), giving a
contribution 1 to each cumulant $c_r(u_{k_1},\dots,u_{k_r})$. For cumulants of order 3 or
higher there are no other contributions. For cumulants of second order one might also
have contributions coming from pairings (where each restriction of a block to a cycle has
one element). This is the same problem as in the $O_n^+$ case; i.e., we only get an
additional contribution for the second order cumulants $c_2(u_k,u_k^*)$. For first order
cumulants, singletons can also appear and make an additional contribution. Taking this
all together gives the formula in the statement.
\end{proof}

In contrast to the two previous cases, the different traces are now not independent/free
any more. Actually, one knows in the classical case that some more fundamental random
variables, counting the number of different cycles, are independent. We can recover this
result, and its free analogue, from Proposition \ref{prop:S-n} in a straightforward way.

\begin{theorem}\label{thm:S-n}
The variables $u_k=\lim_{n\to\infty}{\rm Tr}(u^k)$ are as follows:
\begin{enumerate}
\item For $S_n$ we have a decomposition of type
$$u_k=\sum_{l\mid k}lC_l$$
with the variables $C_k$ being Poisson of parameter $1/k$, and independent.

\item For $S_n^+$ we have a decomposition of the type
$$u_1=C_1,\qquad u_k=C_1+C_k \quad (k\geq 2)$$
where the variables $C_l$ are $*$-free; $C_1$ is free Poisson, whereas $C_2$ is semicircular
and $C_k$, for $k\geq 3$, are circular.
\end{enumerate}
\end{theorem}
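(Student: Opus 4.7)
The strategy is to derive both parts of the theorem by matching joint classical (resp.\ free) cumulants of the $u_k$'s, as computed in Proposition~\ref{prop:S-n}, with those of the proposed decompositions, exploiting (classical or free) independence to collapse the cumulant sums.

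For part (1), I would introduce on an auxiliary classical probability space a family $(\tilde C_l)_{l\in\NN}$ of mutually independent random variables, with $\tilde C_l$ Poisson of parameter $1/l$, and set $\tilde u_k:=\sum_{l\mid k} l\,\tilde C_l$. Using multilinearity of joint cumulants together with the vanishing of mixed cumulants of independent variables (so only the diagonal $l_1=\dots=l_r=l$ contributes), and the fact that the $r$-th classical cumulant of a Poisson of parameter $\lambda$ equals $\lambda$, one computes
$$c_r(\tilde u_{k_1},\dots,\tilde u_{k_r})=\sum_l l^r\cdot(1/l)\cdot\prod_{i=1}^r[l\mid k_i]=\sum_{l\mid\gcd(k_1,\dots,k_r)}l^{r-1}.$$
This matches Proposition~\ref{prop:S-n}(1), so the families $(u_k)_k$ and $(\tilde u_k)_k$ have identical joint classical distributions (both being moment-determinate), and pulling the $\tilde C_l$'s back through this distributional identification yields the desired decomposition.

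For part (2), I would work in a noncommutative $*$-probability space and construct $*$-free variables $\tilde C_1,\tilde C_2,\tilde C_3,\dots$ as follows: $\tilde C_1$ is free Poisson of parameter $1$ (all free cumulants equal $1$); $\tilde C_2=1+s$ with $s$ standard semicircular ($\kappa_1=\kappa_2=1$, others $0$); and for $k\geq 3$, $\tilde C_k=1+c_k$ with $c_k$ a standard circular element (so $\kappa_1(\tilde C_k^e)=1$, $\kappa_2(\tilde C_k^{e_1},\tilde C_k^{e_2})=[e_1=e_2^*]$, others $0$). Define $\tilde u_1:=\tilde C_1$ and $\tilde u_k:=\tilde C_1+\tilde C_k$ for $k\geq 2$. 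Expanding $\kappa_r(\tilde u_{k_1}^{e_1},\dots,\tilde u_{k_r}^{e_r})$ by multilinearity and invoking the vanishing of mixed free cumulants of $*$-free families, the only possibly nonzero contributions come from the ``all $\tilde C_1$'' term (always present, contributing $1$) and, provided all $k_i$ coincide, the ``all $\tilde C_{k_i}$'' term. A short case analysis then verifies agreement with the four cases of Proposition~\ref{prop:S-n}(2): for $r=1$ and $k_1\geq 2$ the sum is $1+1=2$; for $r=2$ with $e_1=e_2^*$ and common $k\geq 3$ the circular covariance contributes an extra $1$; for $k_1=k_2=2$ the semicircular variance contributes an extra $1$; and for $r\geq 3$ only the $\tilde C_1$ part survives, giving $1$.

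The main technical subtlety is not combinatorial but definitional: the semicircular $\tilde C_2$ and the circular $\tilde C_k$ for $k\geq 3$ must be \emph{shifted by $1$} (i.e.\ have mean $1$, not mean $0$) in order to reproduce the first-cumulant value $\kappa_1(u_k)=2$ for $k\geq 2$ predicted by Proposition~\ref{prop:S-n}(2). Once this shift is correctly identified, all remaining steps reduce to a routine expansion using multilinearity, the vanishing of mixed free cumulants for $*$-free variables, and the explicit free cumulants of free Poisson, semicircular, and circular elements; matching joint $*$-cumulants of $(u_k)_k$ with $(\tilde u_k)_k$ then yields the decomposition together with the stated $*$-freeness and distributional properties of the $C_l$'s.
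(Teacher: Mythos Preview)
Your proposal is correct and follows essentially the same approach as the paper: both parts are deduced by matching the joint (classical, resp.\ free) cumulants from Proposition~\ref{prop:S-n} against those of the proposed decomposition, using multilinearity and the vanishing of mixed cumulants for independent/\,$*$-free families. The paper's own proof is terser (for (2) it simply says ``follows easily from Proposition~\ref{prop:S-n}''), and in (1) it phrases the matching in the reverse direction together with a M\"obius-inversion remark to recover the $C_l$'s from the $u_k$'s; your observation that the semicircular and circular pieces $C_k$ for $k\geq 2$ must have mean~$1$ is correct and implicit in the paper's conventions (cf.\ Theorem~\ref{thm:orthogonal}).
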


Let us first note that the first statement is the result of Diaconis and Shahshahani in
\cite{dsh}. Indeed, the matrix coefficients for $S_n$ are given by
$u_{ij}=\chi(\sigma|\sigma(j)=i)$, and it follows that the variable $C_l$ defined by the
decomposition of $u_k$ in the statement is nothing but the number of $l$-cycles. For a
direct proof for the fact that these variables $C_k$ are indeed independent and Poisson
of parameter $1/k$, see \cite{dsh}. In what follows we present a global proof for (1) and
(2), by using Proposition \ref{prop:S-n}.

\begin{proof}

(1) Let $C_k$ be the number of cycles of length $k$. Instead of writing this in terms of
traces of powers of $u$, it clearer to do it the other way round. We have
$u_k=\sum_{l\mid k} l C_l$. We are claiming now that the $C_k$ are independent and each
is a Poisson variable of parameter $1/k$, i.e., that $c_r(C_{l_1},\dots,C_{l_r})$ is zero
unless all the $l_i$'s are the same, say $=l$, in which case it is $1/l$ (independent of
$r$). This is compatible with the cumulants for the $u_k$, according to:
$$c_r(u_{k_1},\dots ,u_{k_r})=\sum_{l_1\mid k_1}\cdots\sum_{l_r\mid k_r}
l_1\cdots l_r c_r(C_{l_1},\dots,C_{l_r})=\sum_{l\mid k_i\forall i} l^r \frac 1l$$

Since the $C_k$'s are uniquely determined by the $u_k$'s, via some kind of M\"obius
inversion, this shows that also the other way round the formula for the cumulants of the
$u_k$'s implies the above stated formula for the cumulants of the $C_k$'s; i.e., we get
the result that the $C_k$ are independent and $C_k$ is Poisson with parameter $1/k$.

(2) This follows easily from Proposition \ref{prop:S-n}.
\end{proof}

\begin{remark}\label{rem:cycles}
1) In the classical case the random variable $C_l$ can be defined by
\begin{equation}\label{eq:cycle}
C_l=\frac 1l\sum_{\substack{i_1,\dots,i_l\\ \text{distinct}}}u_{i_1i_2}u_{i_2i_3}\cdots
u_{i_li_1}.
\end{equation}
Note that we divide by $l$ because each term appears actually $l$-times, in cyclically
permuted versions (which are all the same because our variables commute).

Note that, by using commutativity and the monomial condition, in general the expression
$u_{i_1i_2}u_{i_2i_3}\cdots u_{i_ki_1}$ has to be zero unless the indices
$(i_1,\dots,i_k)$ are of the form $(i_1,\dots,i_l,i_1,\dots,i_l,\dots)$ where $l$ divides
$k$ and $i_1,\dots,i_l$ are distinct. This yields then the relation
$${\rm Tr}(u^k)=\sum_{i_1,\dots,i_l=1}^n u_{i_1i_2}u_{i_2i_3}\cdots u_{i_li_1}=
\sum_{l\vert k} \sum_{\substack{i_1,\dots,i_l\\ \text{distinct}}}
(u_{i_1i_2}u_{i_2i_3}\cdots u_{i_li_1})^{k/l}= \sum_{l\vert k} l C_l,$$ which we used
before to define the $C_l$. [Note that each $u_{i_1i_2}u_{i_2i_3}\cdots u_{i_li_1}$ is an
idempotent, thus the power $k/l$ does not matter.] This explicit form \eqref{eq:cycle} of
the $C_l$ in terms of the $u_{ij}$ can be used to give a direct proof, by using the
Weingarten formula, of the fact that the $C_l$ are independent and Poisson. We will not
present this calculation here, but will come back to this approach in the case of the
hyperoctahedral group in the next section.

2) In the free case we define the ``cycle'' $C_l$ by requiring neighboring indices to be
different,
\begin{equation}\label{eq:cycle-free}
C_l=\sum_{i_1\not=i_2\not=\dots\not=i_l\not=i_1}u_{i_1i_2}u_{i_2i_3}\cdots u_{i_li_1}.
\end{equation}
Note that if two adjacent indices are the same in $u_{i_1i_2}u_{i_2i_3}\cdots u_{i_li_1}$
then, because of the relation $u_{ij}u_{ik}=0$ for $j\not=k$, all must be the same or the
term vanishes. For the case where all indices are the same we have
$$\sum_{i}u_{ii}u_{ii}\cdots u_{ii}=\sum_i u_{ii}=C_1.$$
This gives then the relation
$${\rm Tr}(u^k)=C_k+C_1.$$
Again, the $C_l$ are uniquely determined by the ${\rm Tr}(u^k)$ and thus our calculations
also show that the $C_l$ defined by \eqref{eq:cycle-free} are $*$-free and have the
distributions as stated.
\end{remark}

\section{The hyperoctahedral case}

The methods in the previous section apply, modulo a grain of salt, as well to the
hyperoctahedral case.

\begin{proposition}
The cumulants of $u_k=\lim_{n\to\infty}{\rm Tr}(u^k)$ are as follows:
\begin{enumerate}
\item For $H_n$, the classical cumulants are given by:
$$c_r(u_{k_1},\dots ,u_{k_r})=\sum_{\substack{q\mid k_i
\forall i=1,\dots,r\\ 2|(\Sigma k_i/q)}} q^{r-1}$$

\item For $H_n^+$, the free cumulants are given by:
$$c_r(u_{k_1}^{e_1},\dots ,u_{k_r}^{e_r})=
2,\qquad\text{if $r=2$, $k_1=k_2$, $e_1=e_2^*$ or if $r=2$, $k_1=k_2=2$}$$ and otherwise
by
$$c_r(u_{k_1}^{e_1},\dots ,u_{k_r}^{e_r})=
\begin{cases}
1,&\text{$\sum_l k_l$ even}\\
0,&\text{$\sum_l k_l$ odd}
\end{cases}
$$

\end{enumerate}
\end{proposition}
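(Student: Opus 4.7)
The plan is to apply Theorem~\ref{thm:cumulants}: in each case the cumulant equals the number of partitions $p\in D_k$ satisfying $p\vee\gamma=1_k$ and $p=\gamma(p)$ (up to $O(1/n)$ in the free case), where $D_k=P_h$ consists of all partitions with even-sized blocks in the classical case and $D_k=NC_h$ consists of the noncrossing such partitions in the free case. The strategy is therefore to re-run the counting argument from the proof of Proposition~\ref{prop:S-n}, adding the single restriction that each block of $p$ have even size.

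For part (1), I would begin from the classification obtained in the symmetric case: a partition $p\in P_k$ with $p\vee\gamma=1_k$ and $p=\gamma(p)$ is built from a common divisor $q$ of $k_1,\dots,k_r$ by selecting, in each cycle of length $k_i$, a set of $k_i/q$ equally spaced points (with $q$ possible offsets per cycle after the offset of the first cycle is fixed), producing $q$ blocks of equal size $k/q=\sum_i k_i/q$, and there are $q^{r-1}$ such partitions for every admissible~$q$. The only additional requirement for $H_n$ is that each block have even size, i.e.\ $2\mid k/q$. Summing $q^{r-1}$ over the divisors $q$ of $k_1,\dots,k_r$ satisfying this extra parity condition yields the stated formula.

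For part (2), I would restrict the classical count above to $NC_h$. As in the proof of Proposition~\ref{prop:S-n}(2), every partition coming from the divisor construction with $1<q<k_1$ is crossing, so among the classical candidates only $q=1$ (giving $p=1_k$) and, when $r=2$ and $k_1=k_2$, $q=k_1$ (giving the ``mirror pairing'' $(i,\,k_1+k_2+1-i)$) can survive. The block $1_k$ contributes $1$ exactly when $k=\sum k_l$ is even, and this already handles $r\geq 3$ as well as $r=1$. For $r=2$ the mirror pairing, whose blocks have size~$2$ and are automatically even, must additionally be $\gamma$-invariant: arguing as in the orthogonal case of Theorem~\ref{thm:orthogonal} one checks that invariance occurs precisely when the two cycles rotate in opposite senses, $e_1=e_2^*$; the exceptional value $k_1=k_2=2$ is accounted for by the fact that on a $2$-cycle the forward rotation coincides with its inverse, so both orientations produce invariance. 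Combining the contributions of $1_k$ and of the mirror pairing gives the case-by-case formula.

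The principal difficulty is the $r=2$ analysis in the free case. One must (i) show that no noncrossing partition with blocks of intermediate size (i.e.\ other than $1_k$ and the mirror pairing) can connect both cycles while remaining $\gamma$-invariant, for which an orbit argument shows that such a block's $\gamma$-orbit would tile the two cycles in a way that forces crossings; and (ii) carefully bookkeep the orientation condition $e_1=e_2^*$ together with the degenerate $k_1=k_2=2$ case, so that the separate branches of the stated formula are correctly assembled from the partition count.
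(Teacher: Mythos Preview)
Your proposal is correct and follows exactly the approach the paper takes: the paper's own proof is a single sentence stating that the result ``follows similarly as the proof of Proposition~\ref{prop:S-n}, by taking into account that we have to restrict attention to the partitions having even blocks only.'' Your write-up is simply a more detailed execution of that same idea, correctly tracking the parity condition $2\mid k/q$ in the classical case and, in the free case, the disappearance of the singleton contribution together with the survival of $1_k$ (when $k$ is even) and of the mirror pairing under the orientation condition $e_1=e_2^*$ (with the $k_1=k_2=2$ degeneracy).
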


\begin{proof}
This follows similarly as the proof of Proposition \ref{prop:S-n}, by taking into account that
we have to restrict attention to the partitions having even blocks only.
\end{proof}

\begin{theorem}\label{thm:hyperoctahedral}
The variables $u_k=\lim_{n\to\infty}{\rm Tr}(u^k)$ are as follows:
\begin{enumerate}
\item For $H_n$ we have a decomposition of type
$$u_k=\sum_{l\mid k}l[C_l^++(-1)^{k/l} C_l^-]$$
with the variables $C_l^+$ and $ C_l^-$ being Poisson of parameter $1/{2l}$, and all
$C_l^+,C_l^-$ ($l\in\mathbb{N}$) being independent.

\item For $H_n^+$ we have a decomposition of type
$$u_1=C_1^+-C_1^-\qquad u_k=C_1^++(-1)^k C_1^- +C_k \quad (k\geq 2)$$
where $C_1^+$, $C_1^-$ and $C_k$ ($k\geq 2$) are $*$-free and $C_1^+$, $C_1^-$ are free
Poisson elements of parameter $1/2$, $C_2$ is a semicircular element, and $C_k$ ($k\geq
3$) are circular elements.

\end{enumerate}
\end{theorem}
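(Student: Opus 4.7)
The plan is to follow the strategy used for Theorem \ref{thm:S-n}: define auxiliary variables $C_l^\pm$ (resp.\ $C_1^\pm, C_k$) in an ambient (free) probability space with the joint distribution claimed in the theorem, form the random variables $\widetilde u_k$ by the proposed linear combinations, compute their (free) cumulants from the independence/$*$-freeness and the known distributions of the $C$'s, and compare with the cumulants of $u_k$ given in the preceding Proposition. Since (free) cumulants determine the joint ($*$-)distribution, the agreement shows that $\{u_k\}$ has the same joint distribution as $\{\widetilde u_k\}$ and hence admits the claimed decomposition.

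For part (1), assume the $C_l^\pm$ are independent Poisson variables of parameter $1/(2l)$, so that the only nonvanishing classical cumulants among them are $c_r(C_l^\eta,\ldots,C_l^\eta) = 1/(2l)$ for fixed $l$ and $\eta\in\{+,-\}$. Expanding the $r$-th cumulant of $\widetilde u_{k_i} = \sum_{l\mid k_i} l\bigl[C_l^+ +(-1)^{k_i/l} C_l^-\bigr]$ by multilinearity, the only surviving terms come from tuples where each argument picks the same pair $(l,\eta)$ (with $l$ dividing every $k_i$), giving
$$
c_r(\widetilde u_{k_1},\ldots,\widetilde u_{k_r})
= \sum_{\eta\in\{\pm 1\}}\sum_{l\mid k_i \forall i} l^r \cdot \eta^{\sum_i k_i/l}\cdot\frac{1}{2l}
= \sum_{\substack{l\mid k_i\forall i \\ 2\mid \sum_i k_i/l}} l^{r-1},
$$
which matches the formula from the preceding Proposition.

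For part (2), take $C_1^+, C_1^-$ free Poisson of parameter $1/2$, $C_2$ standard semicircular, and $C_k$ (for $k \ge 3$) standard circular, all $*$-free from one another. Set $\widetilde u_1 := C_1^+ - C_1^-$ and $\widetilde u_k := C_1^+ + (-1)^k C_1^- + C_k$ for $k\ge 2$, and compute $\kappa_r(\widetilde u_{k_1}^{e_1},\ldots,\widetilde u_{k_r}^{e_r})$ using vanishing of mixed free cumulants across free subalgebras, together with the nonvanishing cumulants of the building blocks: $\kappa_r(C_1^\pm,\ldots,C_1^\pm) = 1/2$ for every $r$, $\kappa_2(C_2,C_2) = 1$, and $\kappa_2(C_k,C_k^*) = \kappa_2(C_k^*,C_k) = 1$ for $k \ge 3$. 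Check against the Proposition in three cases: (i) first-order means, where the $C_1^\pm$ contribute $\tfrac12 + (-1)^k/2$, equal to $0$ or $1$ according to the parity of $k$; (ii) the ``value $2$'' second-order special cases, namely $\kappa_2(u_k,u_k^*)$ for $k\ge 3$ and $\kappa_2(u_2,u_2)$, where the extra $1$ comes from the (semi)circular $C_k$; (iii) all other cumulants, where the $C_k$ contributions vanish (for $r\ge 3$ because (semi)circular elements carry no cumulants beyond order $2$, and for differing $k_i$ by freeness), so that only the $C_1^\pm$ terms survive and give $\tfrac12 + (-1)^{\sum_i k_i}/2$. These match the Proposition exactly.

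The main obstacle is the case analysis in part (2), in particular the bookkeeping of the signs $(-1)^{k_i}$ attached to the $C_1^-$ terms: one must confirm that these precisely produce the even/odd parity clauses of the Proposition, and that the single contribution from $C_k$ exactly accounts for the ``value $2$'' second-order special cases, with no undercounting or double-counting. Once the (free) cumulants are seen to agree case by case, the joint distributional claim -- and thereby the decomposition together with the asserted independence/$*$-freeness of the components -- follows immediately.
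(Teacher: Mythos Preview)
Your proposal is correct and follows essentially the same route as the paper, which simply says that (1) ``follows in the same way as in the proof of Theorem~\ref{thm:S-n}'' and that (2) ``follows by direct computation''; your cumulant-matching argument via the preceding Proposition is exactly this. If anything, your phrasing---building an abstract model $\{\widetilde u_k\}$ with the prescribed $C$'s and matching joint (free) cumulants---is slightly cleaner here than a literal transcription of the $S_n$ argument, since in the hyperoctahedral case the $C_l^\pm$ are \emph{not} uniquely determined by the $u_k$'s via M\"obius inversion (a point the paper itself flags immediately after the theorem), so one really is only asserting the existence of such a decomposition at the level of joint distributions.
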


\begin{proof}
(1) This follows in the same way as in the proof of Theorem \ref{thm:S-n}.

(2) This follows by direct computation.
\end{proof}

In the classical case the random variables $C_l^+$ and $C_l^-$ should count the number of
positive cycles of length $l$ and the number of negative cycles of length $l$,
respectively, and should be given by $C_l^+=Z_l^+$ and $C_l^-=Z_l^-$ with
\begin{equation}\label{eq:cycle-positive}
Z_l^+=\frac 1l\sum_{\substack{i_1,\dots,i_l\\\text{distinct}}}
1_{\{1\}}(u_{i_1i_2}u_{i_2i_3}\cdots u_{i_li_1})
\end{equation}
 and
\begin{equation}\label{eq:cycle-negative}
Z_l^-=-\frac 1l\sum_{\substack{i_1,\dots,i_l\\\text{distinct}}}
1_{\{-1\}}(u_{i_1i_2}u_{i_2i_3}\cdots u_{i_li_1}).
\end{equation}

Note that $u_{i_1i_2}u_{i_2i_3}\cdots u_{i_li_1}$ is either -1, 0, 1. $1_{\{1\}}$ denotes
the characteristic function on 1 and $1_{\{-1\}}$ the characteristic function on $-1$. As
in Remark \ref{rem:cycles} it follows that one has the decomposition
\begin{equation}\label{eq:decomposition-hyper}
\Tr(u^k)=\sum_{l\mid k}l[Z_l^++(-1)^{k/l} Z_l^-]
\end{equation}

Again one expects that the random variables defined by \eqref{eq:cycle-positive} and
\eqref{eq:cycle-negative} are all independent and are Poisson, but because now the
$Z_l^+$, $Z_l^-$ are not uniquely determined by the relations
\eqref{eq:decomposition-hyper}, we cannot argue that the $C_l^+$ and $C_l^-$ showing up
in the decomposition in Theorem \ref{thm:hyperoctahedral} are the same as $Z_l^+$ and
$Z_l^-$ defined by \eqref{eq:cycle-positive} and \eqref{eq:cycle-negative}. In order to
see that this is actually the case, we will now, in the following, calculate the
cumulants of the random variables defined by \eqref{eq:cycle-positive} and
\eqref{eq:cycle-negative}.

Note first that we can replace the characteristic functions in the following way:
\begin{equation}
2Z_l^+=\frac 1l\sum_{\substack{i_1,\dots,i_l\\\text{distinct}}}
(u_{i_1i_2}u_{i_2i_3}\cdots u_{i_li_1})^2+u_{i_1i_2}u_{i_2i_3}\cdots u_{i_li_1}
\end{equation}
 and
\begin{equation}
2Z_l^-=\frac
1l\sum_{\substack{i_1,\dots,i_l\\\text{distinct}}}(u_{i_1i_2}u_{i_2i_3}\cdots
u_{i_li_1})^2-u_{i_1i_2}u_{i_2i_3}\cdots u_{i_li_1}.
\end{equation}

Furthermore, we have
$$\sum_{\substack{i_1,\dots,i_l\\\text{distinct}}} u_{i_1i_2}u_{i_2i_3}\cdots u_{i_li_1}=
\sum_{\ker \ii=1_l} u_{i_1i_2}\cdots u_{i_{l}i_1}$$ and
$$\sum_{\substack{i_1,\dots,i_l\\\text{distinct}}} (u_{i_1i_2}u_{i_2i_3}\cdots u_{i_li_1})^2=
\sum_{\ker \ii=\tau_l^{2l}} u_{i_1i_2}\cdots u_{i_{2l}i_1},$$ where $\tau_l^{2l}\in
P(2l)$ is the pairing $\{(1,l+1),(2,l+2),\dots,(l,2l)\}$.

So what we need are cumulants for general variables of the form
$$Z(\sigma):=\sum_{\ker \ii=\sigma} u_{i_1i_2}\cdots u_{i_{l}i_1},$$
for arbitrary $l\in\NN$ and $\sigma\in P(l)$. Note that $Z(\sigma)$ can only be different
from zero if $\sigma$ is of the form $\sigma=\tau_l^k$ for $k,l\in \NN$ with $l\vert k$,
where
$$\tau_l^k=\{(1,l+1,2l+1,\dots,k-l+1),(2,l+2,2l+2,\dots,k-l+2),\dots,(l,2l,\dots,k)\}.$$
For $k=l$, we have $\tau_l^l=1_l$.

\begin{theorem}\label{thm:c-Z-variables}
For all $s\in\NN$, $k_1,\dots,k_s\in\NN$, $\sigma_1\in P(k_1),\dots, \sigma_s\in P(k_s)$
we have
\begin{align*}
c_r[Z(\sigma_1),\dots,Z(\sigma_r)]=\#\{q\in D_k\mid &\, q=\gamma(q), q\vee \gamma=1_k,\\
&\text{$q$ restricted to the $i$-th cycle of $\gamma$ is $\sigma_i$} (i=1,\dots,r)\},
\end{align*}
where $k=\sum_{i=1}^r k_i$ and $\gamma$ is the trace permutation associated to
$k_1,\dots,k_r$
\end{theorem}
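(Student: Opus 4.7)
My plan is to mirror the proof of Theorem~\ref{thm:cumulants}, refining the analysis from traces to the kernel-restricted sums $Z(\sigma_j)$. The two stages are: (i)~establish a refined moment formula
$$\int_G \prod_{j=1}^r Z(\sigma_j)\, du = \#\{q \in D_k : q = \gamma(q),\; q|_{C_j} = \sigma_j \; \forall j\},$$
where $C_j$ denotes the $j$-th cycle of $\gamma$; and (ii)~derive the cumulant identity by M\"obius inversion on $P(r)$.

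For stage~(i), I would start from
$$\int_G \prod_j Z(\sigma_j) \, du = \sum_{\substack{\mathbf{i}\\ \ker\mathbf{i}|_{C_j} = \sigma_j}} \int_G u_{i_1 i_{\gamma(1)}} \cdots u_{i_k i_{\gamma(k)}}\, du,$$
apply Theorem~\ref{thm:Weingarten-formula} to each integrand, and exchange summations. In the hyperoctahedral setting motivating this section, the monomial structure of signed permutation matrices forces $Z(\sigma) = 0$ unless $\sigma$ is of the periodic form $\tau_l^k$, as recorded in the paragraph preceding the theorem; this confines the calculation to $\gamma$-invariant $\sigma_j$'s, and then the orthogonality $W_{kn} G_{kn} = I$ collapses the double Weingarten sum exactly as in the proof of Theorem~\ref{thm:moments}, yielding the stated counting formula.

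For stage~(ii), I follow the scheme in the proof of Theorem~\ref{thm:cumulants}. Setting
$$D_{\pi,\{\sigma_j\}} := \{p \in P_k : p|_v \in D_{|v|} \ \forall v \in \pi,\; p|_{C_j}=\sigma_j \ \forall j\},$$
the multiplicativity from Proposition~\ref{prop:true} gives the characterization
$$D_{\pi,\{\sigma_j\}} = \{p \in D_k : p \leq \pi^\gamma,\; p|_{C_j} = \sigma_j \ \forall j\}.$$
Combining the M\"obius-inversion definition of the classical cumulants with stage~(i) applied to each block of $\pi$ yields
$$c_r = \sum_{\pi \in P(r)} \mu(\pi, 1_r) \cdot \#\{q \in D_k : q = \gamma(q),\; q|_{C_j} = \sigma_j,\; q \leq \pi^\gamma\}.$$
Exchanging summations and substituting $\tau := \pi^\gamma$ (so that $\mu(\pi, 1_r) = \mu(\tau, 1_k)$ and the range becomes $\{\tau \in P(k) : \tau \geq \gamma\}$), the inner M\"obius sum $\sum_{\tau \geq q \vee \gamma} \mu(\tau, 1_k)$ evaluates to $1$ when $q \vee \gamma = 1_k$ and to $0$ otherwise, which is exactly the claimed formula.

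I expect stage~(i) to be the main technical obstacle. The extra constraint $\ker\mathbf{i}|_{C_j} = \sigma_j$ is an \emph{equality} of partitions on each cycle rather than an inequality, so it does not combine with $p \vee \gamma(q) \leq \ker\mathbf{i}$ into a single Gram-matrix entry in the way the unrestricted trace sum does in the proof of Theorem~\ref{thm:moments}. The cleanest route through this difficulty is to exploit the structural vanishing $Z(\sigma) = 0$ for non-periodic $\sigma$ in the $H_n$ case, after which the surviving terms admit a cycle-by-cycle Weingarten computation that produces exactly the claimed count.
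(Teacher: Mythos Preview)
Your stage~(ii) is correct and matches the paper exactly. Stage~(i), however, has a gap that you flag but do not close.

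You rightly observe that the constraint $\ker\mathbf{i}|_{C_j}=\sigma_j$ is an \emph{equality}, which prevents the $\mathbf{i}$-sum from producing a Gram-matrix entry as in the proof of Theorem~\ref{thm:moments}. Your proposed cure---restrict to periodic $\sigma_j$ via the vanishing of $Z(\sigma)$, then do a ``cycle-by-cycle Weingarten computation''---does not resolve this: even for periodic $\sigma_j$ the equality constraint still blocks the $G_{kn}W_{kn}=I$ cancellation, and nothing about periodicity makes the $\mathbf{i}$-sum factor over the cycles of $\gamma$. The phrase ``cycle-by-cycle Weingarten computation'' is not an argument.

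The paper's key move, which you are missing, is to M\"obius-invert \emph{on the kernel side}. One writes
\[
Z(\sigma)=\sum_{\pi\geq\sigma}\mu(\sigma,\pi)\sum_{\ker\mathbf{i}\geq\pi}u_{i_1i_2}\cdots u_{i_li_1},
\]
converting each equality $\ker\mathbf{i}|_{C_j}=\sigma_j$ into a weighted sum over \emph{inequality} constraints $\ker\mathbf{i}\geq\pi_j$. Now the $\mathbf{i}$-sum combines with the Weingarten formula to produce $G_{kn}\bigl(\gamma(q)\vee(\pi_1\circ\cdots\circ\pi_r),\,p\bigr)$, and the cancellation $\sum_p G_{kn}(\cdot,p)W_{kn}(p,q)=\delta(\cdot,q)$ goes through exactly. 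This step uses a property of $D_k$ specific to $H_n$ and \emph{different} from Proposition~\ref{prop:true}: the upward-closedness $\sigma\in D_k,\ \pi\geq\sigma\Rightarrow\pi\in D_k$ (coarsening preserves evenness of all blocks), which ensures the first argument of $G_{kn}$ lies in $D_k$. After cancellation one gets $\gamma(q)\vee(\pi_1\circ\cdots\circ\pi_r)=q$, equivalently $\gamma(q)=q$ and $\pi_1\circ\cdots\circ\pi_r\leq q$; the remaining M\"obius sum over the $\pi_j$ then forces $q|_{C_j}=\sigma_j$, yielding the moment formula for \emph{all} $\sigma_j\in P(k_j)$ at once. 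Your stage~(ii) finishes the argument from there.
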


Note that also the right hand side of the equation is, by the condition $q=\gamma(q)$,
zero unless all $\sigma$ are of the form $\tau_l^k$

\begin{proof}
Let us first calculate the corresponding moment. For this we note that one has
$$\sum_{\ker \ii\geq\pi} u_{i_1i_2}\cdots u_{i_{l}i_1}=\sum_{\substack{\sigma\in P(l)\\
\sigma\geq\pi}} Z(\sigma),$$ and thus, by M\"obius inversion on $P(l)$
$$Z(\sigma)=\sum_{\substack{\pi\in P(l)\\ \pi \geq \sigma}}\mu(\sigma,\pi)\sum_{\ker \ii\geq \pi}
u_{i_1i_2}\cdots u_{i_{l}i_1}.$$ With this we can calculate
\begin{align*}
&\int_{H_n}Z(\sigma_1)\cdots Z(\sigma_r) du=
\sum_{\pi_1,\dots,\pi_r}\mu(\sigma_1,\pi_1)\cdots
\mu(\sigma_r,\pi_r)\sum_{\pi_1\circ\cdots\circ \pi_r\leq\ker \ii} \int_{H_n}
u_{i_1i_{\gamma(1)}}\cdots u_{i_ki_{\gamma(k)}}\\
&=\sum_{\pi_1,\dots,\pi_r}\mu(\sigma_1,\pi_1)\cdots
\mu(\sigma_r,\pi_r)\sum_{\pi_1\circ\cdots\circ \pi_r\leq\ker \ii}\sum_{\substack{q,p\in
D_k\\
p\leq\ker \ii, \gamma(q)\leq \ker \ii}}W_{kn}(p,q)\\
&=\sum_{\pi_1,\dots,\pi_r}\mu(\sigma_1,\pi_1)\cdots \mu(\sigma_r,\pi_r)\sum_{q,p\in D_k}
G_{kn}(\gamma(q)\vee \pi_1\circ\cdots\circ \pi_r,p) W_{kn}(p,q)\\
&=\sum_{\pi_1,\dots,\pi_r}\mu(\sigma_1,\pi_1)\cdots \mu(\sigma_r,\pi_r)\sum_{q\in D_k}
\delta(\gamma(q)\vee \pi_1\circ\cdots\circ \pi_r,q)\\
\end{align*}
In the third line, it looks as if we might have a problem because $\pi_1\circ\cdots\circ
\pi_r$ is in $P_k$, but not necessarily in $D_k$. However, our category $D_k$ has the
nice property that, for $\pi\in P_k$ and $\sigma\in D_k$, $\pi\geq\sigma$ implies that
also $\pi\in D_k$. Thus in particular, $\pi\vee\sigma\in D_k$ for any $\pi\in P_k$ and
$\sigma\in D_k$, and we have in our case that always $\gamma(q)\vee \pi_1\circ\cdots\circ
\pi_r\in D_k$. Now note further that $\gamma(q)\vee \pi_1\circ\cdots\circ \pi_r=q$ is
actually equivalent to $\gamma(q)=q$ and $\pi_1\circ\cdots\circ \pi_r\leq q$. One
direction is clear, for the other one has to observe that $\gamma(q)\leq q$ implies
$\gamma(q)=q$. With this, we get finally
\begin{multline}
\int_{H_n}Z(\sigma_1)\cdots Z(\sigma_r) du\\= \#\{q\in D_k\mid q=\gamma(q), \text{$q$
restricted to the $i$-th cycle of $\gamma$ is $\sigma_i$} (i=1,\dots,r)\}
\end{multline}

From this, we get for the cumulants
\begin{align*}
&c_r[Z(\sigma_1),\dots,Z(\sigma_r)]\\&=\sum_{\pi\in P(r)}\mu(\pi,1_r)\cdot \#\{q\in D_k
\mid
q=\gamma(q),q\vee\gamma\leq\pi^\gamma,\\
&\qquad\qquad\qquad\qquad\qquad\qquad\qquad\text{$q$ restricted to the $i$-th cycle of
$\gamma$ is $\sigma_i$} (i=1,\dots,r)\}.
\end{align*}
The result follows then from M\"obius inversion, as in the proof of Theorem
\ref{thm:cumulants}.
\end{proof}

This shows in particular that $c_r[Z(\sigma_1),\dots,Z(\sigma_r)]$ vanishes unless all
$\sigma_1,\dots,\sigma_r$ have the same number of blocks. This implies that the sets
$\{Z_l^+,Z_l^-\}$ are independent for different $l$. For fixed $l$, we have for all
$e_1,\dots,e_r\in\NN$:
$$
c_r[Z(\tau_{l}^{e_1l}),\dots,Z(\tau_l^{e_rl})]=
\begin{cases}
l^{r-1},&\text{if $\sum_i e_i$ is even}\\
0,&\text{otherwise} \end{cases}.
$$
Thus we get in particular
\begin{multline*}
c_r[Z(\tau_{l_1}^{2l_1})\pm Z(1_{l_1}),\dots,Z(\tau_{l_r}^{2l_r})\pm Z(1_{l_r})]\\=
\begin{cases}
d_{r,l},&\text{if $l_1=\dots=l_r=l$ and all signs are either $+$ or all are $-$}\\
0,&\text{otherwise} \end{cases}
\end{multline*}
where $$d_{r,l}= l^{r-1}\sum^r_{\substack{t=0\\ \text{$t$ even}}} \binom r t=l^{r-1}
2^{r-1}.$$ This shows that also $Z_l^+$ and $Z_l^-$ are independent, and each of them is
Poisson of parameter $1/(2l)$.

\begin{corollary}
In $H_n$ the random variables $Z_l^+$, $Z_l^-$ ($l\in\NN$), defined by
\eqref{eq:cycle-positive} and \eqref{eq:cycle-negative}, are independent Poisson
variables of parameter $1/(2l)$.
\end{corollary}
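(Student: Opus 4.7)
The plan is to assemble the corollary essentially by bookkeeping, since the substantial work has already been carried out in Theorem \ref{thm:c-Z-variables} and in the discussion immediately preceding the corollary. First I would use the identities
$$2l\,Z_l^+=Z(\tau_l^{2l})+Z(1_l),\qquad 2l\,Z_l^-=Z(\tau_l^{2l})-Z(1_l),$$
which follow directly from the definitions \eqref{eq:cycle-positive}, \eqref{eq:cycle-negative} and the reduction to the $Z(\sigma)$ variables performed in the text. By multilinearity of the classical cumulants,
$$c_r\bigl[Z_{l_1}^{\epsilon_1},\ldots,Z_{l_r}^{\epsilon_r}\bigr]=\frac{1}{\prod_i (2l_i)}\,c_r\bigl[Z(\tau_{l_1}^{2l_1})+\epsilon_1 Z(1_{l_1}),\ldots,Z(\tau_{l_r}^{2l_r})+\epsilon_r Z(1_{l_r})\bigr],$$
where $\epsilon_i\in\{+1,-1\}$ encodes the $\pm$ choice.

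Next I would invoke the formula already displayed in the text, namely that the cumulant on the right vanishes unless $l_1=\cdots=l_r=l$ and $\epsilon_1=\cdots=\epsilon_r$, in which case it equals
$$d_{r,l}=l^{r-1}\sum_{\substack{t=0\\ t\text{ even}}}^r\binom{r}{t}=l^{r-1}2^{r-1}.$$
Dividing by $(2l)^r$ gives
$$c_r\bigl[Z_l^\epsilon,\ldots,Z_l^\epsilon\bigr]=\frac{l^{r-1}2^{r-1}}{(2l)^r}=\frac{1}{2l},$$
a constant independent of $r\geq 1$. This is precisely the cumulant sequence of a Poisson variable of parameter $1/(2l)$; the vanishing of all other mixed cumulants among $\{Z_l^+,Z_l^-:l\in\mathbb{N}\}$ is exactly the classical criterion for independence of this family.

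There is no real obstacle here; the corollary is a direct consequence of the cumulant formula proved in Theorem \ref{thm:c-Z-variables}. The only point to check carefully is the arithmetic of the normalizing constants $1/(2l)$ and of the binomial sum over even $t$, which gives the factor $2^{r-1}$ responsible for cancelling the $2^r$ in the denominator and leaving the clean parameter $1/(2l)$. Everything else follows from standard multilinearity of cumulants and the characterization of Poisson distributions by constant cumulant sequences.
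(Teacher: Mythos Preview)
Your proposal is correct and follows exactly the same route as the paper: the paper carries out precisely this computation in the paragraphs immediately preceding the corollary, expressing $2lZ_l^\pm$ as $Z(\tau_l^{2l})\pm Z(1_l)$, expanding the mixed cumulant by multilinearity, invoking Theorem~\ref{thm:c-Z-variables} to obtain $d_{r,l}=l^{r-1}2^{r-1}$ when all $l_i$ and all signs agree (and zero otherwise), and then reading off the Poisson parameter $1/(2l)$ and independence. There is nothing to add.
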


\begin{remark}
In the free case $H_n^+$, the variables $u_{ii}$ have also spectrum $\{-1,0,1\}$ and we
can consider a positive/negative decomposition for $C_1$, i.e.,
$$C_1^+=\sum_{i}1_{\{1\}}(u_{ii})$$
and
$$C_1^-=-\sum_{i}1_{\{-1\}}(u_{ii});$$
the other $C_l$, $l\geq 2$, are just as in the case of $S_n^+$. Similarly as for $H_n$,
one can show that these variables are the ones showing up in the decomposition for
$H_n^+$ in Theorem \ref{thm:hyperoctahedral}.

\end{remark}

\section{The half-liberated cases}
The half-liberated quantum groups $O_n^*$ and $H_n^{(s)}$ are neither classical nor free
groups, so both classical and free cumulants are inadequate tools for getting information
on the distribution of their traces. In \cite{bcs2}, we introduced half-liberated
cumulants to deal with half-independence, but one has to realize that we do not get an
analogue of Theorem \ref{thm:cumulants} for them, because the underlying ``balanced''
partition lattices do not share the multiplicativity property from Proposition
\ref{prop:true}. In order to investigate the distribution of traces in the half-liberated
cases we will thus have to proceed via another route. The key insight is here that the
half-liberated situations are actually ``orthogonal'' versions of classical unitary
groups and that the main computations can be done over these unitary groups instead.

\subsection{The half-liberated orthogonal group $O_n^*$}
Let $u=(u_{ij})_{i,j=1}^n$ be the fundamental representation of $O_n^*$, and let
$v=(v_{ij})_{i,j=1}^n$ be the fundamental representation of the unitary group $U_n$. Then
we can ``orthogonalize'' $U_n$ by considering
\begin{equation}\label{eq:orthogonalizing}
w_{ij}:=\begin{pmatrix}
0& v_{ij}\\
\overline{v_{ij}}&0\end{pmatrix}.
\end{equation}
Then $w=(w_{ij})_{i,j=1}^n$ is an orthogonal matrix and a simple calculation shows that
the $w_{ij}$ half-commute. It is also easy to see (by invoking the Weingarten formula for
$U_n$, see below, Eq. \eqref{eq:Weingarten-Un}), that under this map the Haar state on
$O_n^*$ goes to $\int_{U_n}\otimes {\rm tr}_2$ . Since the Haar state on $O_n^*$ is
faithful \cite{bve} , the mapping $u_{ij}\mapsto w_{ij}$ is actually an isomorphism.

So we have
$$\Tr(u^{2k+1})=
\begin{pmatrix}
0& \Tr\bigl((v\bar v)^k v\bigr)\\
\Tr\bigl((\bar v v)^k \bar v\bigr)&0
\end{pmatrix}$$ and
$$\Tr(u^{2k})=
\begin{pmatrix}
\Tr\bigl((v\bar v)^k\bigr)&0\\
0&\Tr\bigl((\bar v v)^k\bigr)
\end{pmatrix}$$

So what we need the understand is the distribution of the variables
\begin{equation}\label{eq:Un}
v_{2k+1}:=\lim_{n\to\infty}\Tr\bigl((v\bar v)^k v\bigr),\qquad
v_{2k}:=\lim_{n\to\infty}\Tr\bigl((v\bar v)^k\bigr).\end{equation}

\begin{proposition}
Let $(v_k)_{k\geq 1}$ be as in \eqref{eq:Un}, where $v=(v_{ij})_{i,j=1}^n$ are the
coordinates of the classical unitary group $U_n$. Then we have: the variables
$(v_k)_{k\geq 1}$ are independent; for $k$ even, $v_k$ is a real Gaussian with mean 0 or
1, depending on whether $k/2$ is odd or even, and variance equal to $k/2$; for $k$ odd,
$v_k$ is a complex Gaussian with mean 0 and variance 1.
\end{proposition}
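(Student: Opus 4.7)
The plan is to mirror the strategy used in the proof of Theorem \ref{thm:orthogonal}, but over the classical unitary group $U_n$ using the unitary Weingarten calculus in place of the orthogonal one. Everything rests on the standard identity
$$\int_{U_n} v_{a_1 b_1}\cdots v_{a_m b_m}\,\bar v_{c_1 d_1}\cdots \bar v_{c_{m'} d_{m'}}\,dv=\delta_{m,m'}\sum_{\sigma,\tau\in S_m}\mathrm{Wg}(\sigma\tau^{-1},n)\prod_{s=1}^m\delta_{a_s,c_{\sigma(s)}}\delta_{b_s,d_{\tau(s)}},$$
together with the asymptotic $\mathrm{Wg}(\pi,n)=n^{-2m+\#\mathrm{cyc}(\pi)}(1+O(n^{-2}))$. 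The Kronecker factor $\delta_{m,m'}$ gives for free that any joint moment of the $v_k$'s vanishes unless the total degree in $v$ equals the total degree in $\bar v$; in particular, the mean of $v_{2k+1}$ is always zero, and cross-moments between powers of incompatible parity are zero.

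The next step is to unfold each $\Tr\bigl((v\bar v)^{k_a/2} v^{k_a\bmod 2}\bigr)^{e_a}$ as a multi-sum over row and column indices, and to read off from its cyclic structure two ``trace permutations'' $\gamma_I,\gamma_J\in S_m$ acting respectively on the row labels and on the column labels of the collected $v$-factors. Substituting into the Weingarten formula gives an expression whose leading order in $n$ is controlled by $\#\mathrm{cyc}(\sigma^{-1}\gamma_I)+\#\mathrm{cyc}(\tau^{-1}\gamma_J)-\#\mathrm{cyc}(\sigma\tau^{-1})$: only pairs $(\sigma,\tau)$ with $\sigma$ preserving the cycle structure of $\gamma_I$ and $\tau$ that of $\gamma_J$ survive the $n\to\infty$ limit, in direct analogy with the ``$p=\gamma(p)$'' condition appearing in Theorem \ref{thm:moments}.

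Passing to classical cumulants via M\"obius inversion on $P(r)$, exactly as in the proof of Theorem \ref{thm:cumulants}, one finds that the $r$th cumulant is asymptotically the number of invariant pairs $(\sigma,\tau)$ that \emph{connect all $r$ trace-cycles}. For $r\ge 3$ this number is zero by the same argument as for $O_n$: a rotation-invariant matching between two cycles can only match them one-to-one and so cannot extend to a third. Hence cumulants of order $\ge 3$ vanish asymptotically, and the family $(v_k)$ is jointly Gaussian. For $r=2$ and $k\ne l$, the same reasoning (cycles of different lengths cannot be matched invariantly) gives vanishing covariance and hence the asserted independence.

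It remains to compute the means and variances by an explicit count of invariant configurations inside a single trace. For $v_{2k}$, the variance comes from counting the rotations of the $v$-cycle against the $\bar v$-cycle that commute with the trace permutation, yielding $k/2$, and the mean comes from a single diagonal rotation whose surviving or cancelling contribution depends on the parity of $k/2$. For $v_{2k+1}$, the non-self-adjointness forces one to consider the covariance of $v_{2k+1}$ with its conjugate: the unique cyclic matching of the $k{+}1$ copies of $v$ with the $k{+}1$ copies of $\bar v$ gives covariance $1$ and confirms the complex Gaussian character. The main obstacle I anticipate is exactly this last bookkeeping: the unitary Weingarten sums over $S_m\times S_m$ with two independent rotation actions, so some care is needed to enumerate the surviving ``doubly rotation-invariant'' configurations and to recover the parity-of-$k/2$ alternation for the mean of $v_{2k}$.
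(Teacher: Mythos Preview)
Your plan is essentially the paper's own strategy: apply the unitary Weingarten formula, extract the leading $n^{0}$ contribution to the joint moments, and read off Gaussianity and independence from the resulting combinatorics. The difference is purely in the packaging of the Weingarten calculus, and the paper's choice is worth noting because it dissolves exactly the bookkeeping you flag as the main obstacle.

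Instead of the $(\sigma,\tau)\in S_m\times S_m$ formulation, the paper writes the unitary Weingarten formula in terms of pairings $p,q\in P_2(\ee)$ on the full position set $\{1,\ldots,k\}$, where $\ee=(e_1,\ldots,e_k)$ is the word in $\{1,*\}$ recording the pattern of $v$'s and $\bar v$'s, and $P_2(\ee)$ consists of those pairings in which every block joins a $1$-position to a $*$-position. In that language the computation of Theorem~\ref{thm:moments} carries over verbatim to give
\[
\int v_{k_1}^{e_1}\cdots v_{k_s}^{e_s}\,=\,\#\{p\in P_2(\ee)\mid p=\gamma(p)\}+O(1/n),
\]
with the single trace permutation $\gamma\in S_k$; independence and the identification of each law then follow word for word from the proof of Theorem~\ref{thm:orthogonal}, since a $\gamma$-invariant pairing cannot connect cycles of different lengths. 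In particular the mean and variance of $v_{2k}$ and the circularity of $v_{2k+1}$ drop out from counting $\gamma$-invariant pairings of an alternating $1{*}1{*}\cdots$ word, with no separate row/column tracking.

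By contrast, your exponent $\#\mathrm{cyc}(\sigma^{-1}\gamma_I)+\#\mathrm{cyc}(\tau^{-1}\gamma_J)-\#\mathrm{cyc}(\sigma\tau^{-1})$ is the shape one gets when the $v$-indices and $\bar v$-indices are cycled \emph{independently} (as for products of $\Tr(v^{k})$ and $\overline{\Tr(v^{l})}$); for the alternating traces $\Tr((v\bar v)^m\cdots)$ the row index of each $v$-factor is tied to the column index of an adjacent $\bar v$-factor inside the same trace, so the index sum collapses to a single cycle count rather than two. You can make the $(\sigma,\tau)$ approach work, but that part has to be redone; the $P_2(\ee)$ formalism avoids the issue entirely.
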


\begin{proof}
For $\epsilon=(e_1,\dots,e_k)$ a string of 1's and $*$'s, let us denote by $P_2(\ee)$ the
pairings in $P_k$ such that each block joins a 1 and a $*$. Then the Weingarten formula
for $U_n$ \cite{...} says that with the notation
$$u_{ij}^e:=\begin{cases}
u_{ij},&\text{if $e=1$}\\
\overline{u_{ij}},&\text{if $e=*$}\end{cases}$$ we have
\begin{equation}\label{eq:Weingarten-Un}
\int_{U_n} u_{i_1j_1}^{e_1}\cdots u_{i_kj_k}^{e_k}du=\sum_{\substack{p,q\in P_2(\ee)\\
p\leq \ker \ii,q\leq \ker \ii}}W(p,q),
\end{equation}
where $\ee=(e_1,\dots,e_k)$.

As in the proof of Theorem \ref{thm:moments} this implies that
\begin{equation}\label{eq:moments-On}
\int v_{k_1}^{e_1}\cdots v_{k_s}^{e_s}=\#\{p\in P_2(\ee)\mid p=\gamma(p)\}+O(1/n).
\end{equation}

The condition $p=\gamma(p)$ implies that the pairing $p$ cannot join two cycles of
different lengths, which shows that such an expectation factorizes according to the cycle
lengths, which implies the independence of the $v_k$. The statements on the distribution
of $v_k$ follow also immediately from \eqref{eq:moments-On}.

\end{proof}

Transferring these results from the $v_k$ to
\begin{equation}\label{eq:orth-traces}
u_{2k+1}=
\begin{pmatrix}
0& v_{2k+1}\\
\overline{v_{2k+1}}&0
\end{pmatrix}, \qquad
u_{2k}=
\begin{pmatrix}
v_{2k}&0\\
0&\overline{v_{2k}}
\end{pmatrix}
\end{equation}
and noting that the distribution of $u_{2k+1}$ is equal to that of $\sqrt{\vert
v_{2k+1}\vert^2}$ (which is a symmetrized Rayleigh distribution) yields then the
following result. (See \cite{bcs2} for the notion of ``half-independence''.)

\begin{theorem}
For $O_n^*$, the variables $u_k=\lim_{n\to\infty}{\rm Tr}(u^k)$ are as follows. The sets
$\{u:k\mid \text{$k$ odd}\}$ and $\{u_k\mid\text{$k$ even}\}$ are independent; for $k$
even, the $u_k$ are independent real Gaussian of mean 0 or 1, depending on whether $k/2$
is even or odd, and variance $k/2$; for $k$ odd, the $v_k$ are half-independent
symmetrized Rayleigh variables with variance 1.
\end{theorem}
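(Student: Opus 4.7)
The plan is to pull the entire question back to the classical unitary group via the isomorphism $u_{ij}\mapsto w_{ij}$ of \eqref{eq:orthogonalizing}. Since, as already noted above, that map carries the Haar state on $O_n^*$ to $\int_{U_n}\otimes {\rm tr}_2$, the joint $*$-distribution of the family $(u_k)_{k\geq 1}$ is computed by evaluating the block matrices displayed in \eqref{eq:orth-traces} against this product state. The joint $*$-distribution of the auxiliary variables $v_k$ defined in \eqref{eq:Un} has been settled in the preceding Proposition, so the task reduces to transferring those statements through the block matrix identities.

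First I would treat the even indices. A cyclicity-of-trace argument shows $\overline{v_{2k}}=v_{2k}$, i.e.\ $v_{2k}$ is real, so the block matrix for $u_{2k}$ collapses to $v_{2k}\cdot I_2$. Consequently its $*$-distribution under $\int_{U_n}\otimes {\rm tr}_2$ coincides with that of $v_{2k}$ under $\int_{U_n}$, and the real Gaussian statement with the stated mean and variance $k/2$, as well as mutual independence of the $u_{2k}$ for distinct even $k$, follow immediately from the Proposition.

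Next I would handle the odd indices. Here $u_{2k+1}$ is the off-diagonal self-adjoint $2\times 2$ matrix from \eqref{eq:orth-traces}, and a direct computation gives $u_{2k+1}^2=|v_{2k+1}|^2\cdot I_2$, so its spectrum is $\{+|v_{2k+1}|,-|v_{2k+1}|\}$ with equal ${\rm tr}_2$-weight. Since the Proposition identifies $v_{2k+1}$ as a standard complex Gaussian, $|v_{2k+1}|$ is Rayleigh of variance $1$, and the spectral distribution of $u_{2k+1}$ is its symmetrization. Independence of the odd family from the even family comes for free: any mixed moment in $u_{2k}$'s and $u_{2k+1}$'s factors under $\int_{U_n}\otimes {\rm tr}_2$ because the even and odd $v_k$'s are independent in $L^\infty(U_n)$ by the Proposition.

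The delicate point, which I expect to be the main obstacle, is the half-independence statement within the odd family. My plan is to evaluate an arbitrary mixed $*$-moment $(\int_{U_n}\otimes {\rm tr}_2)(u_{2k_1+1}^{\epsilon_1}\cdots u_{2k_r+1}^{\epsilon_r})$ directly: multiplying out a word in these off-diagonal blocks, words of odd length produce off-diagonal matrices and are annihilated by ${\rm tr}_2$, while words of even length produce diagonal matrices whose entries are alternating products in the $v_{2k_i+1}$ and $\overline{v_{2k_i+1}}$. Taking ${\rm tr}_2$ and then $\int_{U_n}$, the independence and complex Gaussian Wick formula for the $v_{2k+1}$'s expresses the moment as a sum over pairings matching each $v_{2k+1}$ with a $\overline{v_{2k+1}}$ of the same index. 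This is precisely the mixed moment formula characterizing a half-independent family of symmetrized Rayleigh variables in the sense of \cite{bcs2}, so matching the two formulas yields the half-independence. The obstacle is the bookkeeping of the block-multiplication and the identification with the half-independence Wick rule from \cite{bcs2}; once those are set up it reduces to a direct matrix calculation inside $M_2(L^\infty(U_n))$ rather than anything genuinely deep.
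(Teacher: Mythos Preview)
Your proposal is correct and follows exactly the route the paper takes: the paper's proof is essentially the single sentence preceding the theorem, namely ``transferring these results from the $v_k$ to \eqref{eq:orth-traces} and noting that the distribution of $u_{2k+1}$ is that of the symmetrized Rayleigh,'' and you have simply spelled out that transfer in detail, including the bookkeeping for half-independence that the paper leaves to the reference \cite{bcs2}.
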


\subsection{The hyperoctahedral series $H_n^{(s)}$}
The hyperoctahedral series $H_n^{(s)}$ (for $s=2,3,\dots,\infty$) is determined by the
partition lattice of all $s$-balanced partitions, see Theorem \ref{thm:hyper-series}.
This series includes the classical hyperoctrahedral group for $s=2$, $H_n^{(2)}=H_n$, and
the half-liberated hyperoctahedral group for $s=\infty$, $H_n^{(\infty)}=H_n^*$. As for
$O_n^*$, these groups can be considered as orthogonal versions of classical unitary
groups. Namely, let $H_n^s=\mathbb Z_s\wr S_n$ be the complex reflection group consisting
of monomial matrices having the $s$-roots of unity as nonzero entries. (Note that for
$s=2$, $H_n^{(2)}=H_n^{2}$.) Then the relation between $H_n^{(s)}$ and $H_n^{s}$ is the
same as the one between $O_n^*$ and $U_n$, i.e., we can represent the coordinates
$u_{ij}$ of $H_n^{(s)}$ by $w_{ij}$ according to \eqref{eq:orthogonalizing}, where
$v_{ij}$ are the coordinates of $H_n^{s}$. So again, we can realize the asymptotic traces
$u_k$ of $H_n^{(s)}$ in the form \eqref{eq:orth-traces}, where the $v_k$ are now the
asymptotic traces in $H_n^{s}$ according to \eqref{eq:Un}. So our main task will be the
determination of the distribution of these $v_k$.

Actually, we can treat more generally asymptotic traces with arbitrary pattern of the
conjugates. So let us consider for an arbitrary string $\ee=(e_1,\dots,e_k)$ of $1$ and
$*$ the variable
$$v(\ee):=\lim_{n\to\infty}\Tr(v^{e_1}\cdots v^{e_k}).$$

For $\epsilon=(e_1,\dots,e_k)$ a string of 1's and $*$'s, we denote by $P^s(\ee)$ the
partitions in $P_k$ such that each block joins the same number, modulo $s$, of 1 and $*$.
Then the Weingarten formula for $H^{s}$ says that with the notation
$$v_{ij}^e:=\begin{cases}
v_{ij},&\text{if $e=1$}\\
\overline{v_{ij}},&\text{if $e=*$}\end{cases}$$ we have for the coordinate functions
$v=(v_{ij})_{i,j=1}^n$ of $H_n^s$ that
$$\int_{H_n^{s}} v_{i_1j_1}^{e_1}\cdots v_{i_kj_k}^{e_k}du=
\sum_{\substack{\pi,\sigma\in P^s(\bee)\\ \pi\leq \ker \ii\\ \sigma\leq \ker
\ii}}W_{\ee,n}(\pi,\sigma),$$ where $\ee=(e_1,\dots,e_k)$ and $W_{\ee,n}$ is the inverse
of the Gram matrix $G_{\ee,n}=(n^{\vert p\vee q\vert})_{p,q\in P^s(\ee)}$. The leading
order in $n$ of the Weingarten function $W_{\ee,n}$ is given by
$$W_{\ee,n}(p,q)=n^{\vert p\vee
q\vert -\vert p\vert - \vert q\vert}\bigl(1+O(1/n)\bigr)$$

\begin{theorem}\label{thm:cumulants-Hs}
Fix $s\in\{2,3,\dots,\infty\}$ and consider $H^{s}$. Consider $r\in \mathbb N$,
$k_1,\dots,k_r\in \mathbb N$, and denote by $\gamma\in S_k$ the trace permutation
associated to $k_1,\ldots,k_r$. Then, for any strings $\ee_1,\dots,\ee_r$ of respective
lengths $k_1,\dots,k_r$ we have the classical cumulants
$$c_r(v(\ee_1),\ldots,v(\ee_r))
=\#\{p\in P^s(\ee_1\cdots\ee_r)|p\vee\gamma=1_k,\,p=\gamma(p)\},$$ where the product of
strings is just given by their concatenation.
\end{theorem}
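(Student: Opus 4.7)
The plan is to parallel the proof of Theorem \ref{thm:cumulants}(1), using the Weingarten formula for $H_n^s$ given just before the theorem in place of the one for easy quantum groups. As a first step, I would establish the moment analog of Theorem \ref{thm:moments}: if $\gamma$ is the trace permutation of $k_1,\dots,k_r$ and $\ee=\ee_1\cdots\ee_r$ denotes the concatenation, then
\begin{equation*}
\lim_{n\to\infty}\int_{H_n^s}\Tr(v^{\ee_1})\cdots\Tr(v^{\ee_r})\,du=\#\{p\in P^s(\ee)\mid p=\gamma(p)\}.
\end{equation*}
Expanding each trace into coordinates, the product takes the form $\sum_{\ii}v_{i_1i_{\gamma(1)}}^{e_1}\cdots v_{i_ki_{\gamma(k)}}^{e_k}$; substituting the Weingarten formula and summing over $\ii$ gives $\sum_{\pi,\sigma\in P^s(\ee)}n^{|\pi\vee\gamma(\sigma)|}W_{\ee,n}(\pi,\sigma)$. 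Plugging in $W_{\ee,n}(\pi,\sigma)=n^{|\pi\vee\sigma|-|\pi|-|\sigma|}(1+O(1/n))$, the $n$-exponent of each summand is $|\pi\vee\gamma(\sigma)|+|\pi\vee\sigma|-|\pi|-|\sigma|\le 0$, with equality iff $\sigma\le\pi\le\gamma(\sigma)$; since $|\sigma|=|\gamma(\sigma)|$, this forces $\pi=\sigma=\gamma(\sigma)$, yielding the claimed formula in the limit.

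The cumulants are then extracted by classical M\"obius inversion on $P(r)$, following the proof of Theorem \ref{thm:cumulants}(1) nearly verbatim. The structural ingredient replacing Proposition \ref{prop:true} is the elementary multiplicativity
\begin{equation*}
\{p\in P^s(\ee)\mid p\le\sigma^\gamma\}=\{p\in P_k\mid p|_V\in P^s(\ee|_V)\text{ for every block }V\text{ of }\sigma^\gamma\},
\end{equation*}
which is immediate since the $s$-balanced condition is a property of each individual block. Applying the moment formula block-by-block gives
\begin{equation*}
\prod_{V\in\sigma}\lim_{n\to\infty}\int_{H_n^s}\prod_{i\in V}\Tr(v^{\ee_i})\,du=\#\{p\in P^s(\ee)\mid p\le\sigma^\gamma,\,p=\gamma(p)\}.
\end{equation*}
Plugging this into the M\"obius expression for the classical cumulants, swapping the two summations, re-parametrising $\sigma\in P(r)$ by $\tau:=\sigma^\gamma\in P_k$ (which imposes $\tau\ge\gamma$ and satisfies $\mu(\sigma,1_r)=\mu(\tau,1_k)$), and finally applying $\sum_{\tau\ge p\vee\gamma}\mu(\tau,1_k)=\delta_{p\vee\gamma,\,1_k}$ produces the stated count.

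The main obstacle is the moment step. Unlike in the classical easy group case, $\gamma(\sigma)$ need not lie in $P^s(\ee)$ when $\sigma\in P^s(\ee)$: cyclic rotation within a cycle of $\gamma$ can shift positions so that a block's count of $1$'s versus $*$'s falls out of balance modulo $s$. Consequently the exact substitution $n^{|\pi\vee\gamma(\sigma)|}=G_{\ee,n}(\gamma(\sigma),\pi)$ used in Theorem \ref{thm:moments} is unavailable, and one must extract the leading order from the Weingarten asymptotics rather than obtain an identity at finite $n$. This is harmless for the stated theorem, since $v(\ee_i)$ is by definition the $n\to\infty$ limit and only the leading contribution survives.
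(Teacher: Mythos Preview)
Your proposal is correct and follows essentially the same route as the paper's proof: derive the asymptotic moment formula via the Weingarten expansion (noting, as you do, that $\gamma$ need not preserve $P^s(\ee)$, so only the $O(1/n)$ version is available), then repeat the M\"obius-inversion argument of Theorem~\ref{thm:cumulants}(1) using the block-wise multiplicativity $\{p\in P^s(\ee)\mid p\le\sigma^\gamma\}=\{p\in P_k\mid p|_V\in P^s(\ee|_V)\ \forall\,V\in\sigma^\gamma\}$, which the paper records as the analogue of~\eqref{eq:D-sigma}. Your observation that this multiplicativity is automatic here because the $s$-balanced condition is a per-block property is exactly the point the paper makes in explaining why the obstruction from Proposition~\ref{prop:true} disappears.
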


\begin{proof}
As in the proof of Theorem \ref{thm:moments} one gets for the moments
$$\int_{H_n^s} \Tr(v(\ee_1))\cdots\Tr(v(\ee_r))dv
=\#\{p\in P^s(\ee_1\cdots\ee_r)|\,p=\gamma(p)\}+O(1/n).$$ Note that $\gamma$ does not
necessarily map $P^s(\ee_1\cdots\ee_r)$ into itself, and thus we only get the asymptotic
version with lower order corrections.

We can then repeat the proof of Theorem \ref{thm:cumulants}. Let us write $\ee$ for
$\ee_1\cdots\ee_r$; then we only have to note that $P^s(\ee\vert_v)$ (for a block
$v\in\sigma$) records the information about the original positions of the 1 and $*$ in
$\ee$, and thus the multiplicativity issue which prevented us from extending Theorem
\ref{thm:cumulants} to all easy classical groups, is not a problem here. Indeed, we have
the analogue of \eqref{eq:D-sigma},
$$P^s(\ee)_\sigma:=\{p\in P_k\mid p\vert_v \in P^s(\ee\vert_v) \,\forall v\in\sigma\}
=\{p\in P^s(\ee)\mid p\leq \sigma^\gamma\}.
$$

\end{proof}

Again, one can reduce the traces to more basic ``cycle'' variables. As before, we denote,
for $l\vert k$, by $\tau_l^k\in P_k$ the partition
$$\tau_l^k=\{(1,l+1,2l+1,\dots,k-l+1),(2,l+2,2l+2,\dots,k-2+2),\dots,(l,2l,\dots,k)\}$$
Then we have
\begin{equation}\label{eq:vee}
v(\ee)=\sum_{l\mid k} Z(\tau_l^k,\ee),
\end{equation}
where
\begin{equation}
Z(\tau_l^k,\ee):=\sum_{\ker \ii=\tau_l^k} v_{i_1i_2}^{e_1}\cdots v_{i_{k}i_1}^{e_k},
\end{equation}
for arbitrary $l,k\in\NN$ with $l\vert k$, and $\ee=(e_1,\dots,e_k)$.

As in the proof of Theorem \ref{thm:c-Z-variables} we can show
\begin{align}\label{eq:cr-Z}
c_r[Z(\tau_{l_1}^{k_1},\ee_1),&\dots,Z(\tau_{l_r}^{k_r},\ee_r)]\\ \notag &= \#\{p\in
P^s(\ee_1\cdots \ee_r)\mid \, p=\gamma(p), p\vee \gamma=1_k,\\
\notag&\qquad\qquad\text{$p$ restricted to the $i$-th cycle of $\gamma$ is
$\tau_{l_i}^{k_i}$} (i=1,\dots,r)\}.
\end{align}
Clearly, this is only different from zero if $l_1=\cdots=l_r$.

Let us define random variables $\Cp(\tau_l^k,\ee)$ by specifying their distribution as
\begin{multline}\label{eq:def-Cp}
c_r[\Cp(\tau_{l_1}^{k_1},\ee_1),\dots,\Cp(\tau_{l_r}^{k_r},\ee_r)]\\
=\begin{cases} 1/l,&\text{if $l_1=\dots=l_r=l$ and
$p(\tau_{l}^{k_1},\dots,\tau_{l}^{k_r})\in P^s(\ee_1\cdots \ee_r)$}\\
0,&\text{otherwise}
\end{cases}
\end{multline}
where $p(\tau_{l}^{k_1},\dots,\tau_{l}^{k_r})$ is that partition in $P_k$ whose $i$-th
block consists of the union of the $i$-th blocks of all the $\tau$'s, i.e., it is equal
to $$ \tau_{l}^{k_1}\circ\dots\circ\tau_{l}^{k_r} \vee
\{(1,k_1+1,k_1+k_2+1,\dots,k-k_r+1),\dots,(l,k_1+l,k_1+k_2+l,\dots,k-k_r+l)\}.
$$

Then we can express our variables $Z(\tau_l^k,\ee)$ in terms of the $\Cp(\tau_l^k,\ee)$
by
$$Z(\tau_l^k,\ee)=\sum_{t=1}^{l} \Cp(\tau_l^k,\ee^{(t)}),$$
where $\ee^{(t)}$ is the $t$-fold cyclic shift of the string $\ee$, i.e.,
$$\ee^{(t)}=(e_{t+1},e_{t+2},\dots,e_{t})$$

The definition \eqref{eq:def-Cp}, on the other hand, shows that the variables
$\Cp(\tau_l^k,\ee)$ are compound Poisson elements, which are independent for different
$l$. Namely, we can associate to $\Cp(\tau_l^k,\ee)$ a random variable
$$a(\tau_l^k,\ee)=\left(\prod_{\substack{\text{$i_1$ in first}\\\text{block of $\tau_l^k$}}}
\omega^{e_{i_1}}\right)\otimes \left(\prod_{\substack{\text{$i_2$ in second}\\
\text{block of $\tau_l^k$}}}\omega^{e_{i_2}}\right)\otimes\cdots\otimes
\left(\prod_{\substack{\text{$i_{l}$ in $l$-th}\\ \text{block of
$\tau_l^k$}}}\omega^{e_{i_{l}}}\right)\in C(\TT)^{\otimes l}.$$ Then we have
\begin{equation}\label{eq:c-Cp}
c_r[\Cp(\tau_{l_1}^{k_1},\ee_1),\dots,\Cp(\tau_{l_r}^{k_r},\ee_r)]= \frac 1{l_1}
\psi\bigl(a(\tau_{l_1}^{k_1},\ee_1)\cdots a(\tau_{l_r}^{k_r},\ee_r)\bigr)
\end{equation}
where $\psi$ is $\bigoplus_l \ff^{\otimes l}$ on $\bigoplus_l C(\TT)^{\otimes l}$, with
$\ff$ denoting integration with respect to the Haar measure on $\ZZ_s$ (where the latter
is being embedded into the unit circle $\TT$).

The equation \eqref{eq:c-Cp} shows that the cumulants of the variables $\Cp$ are given,
up to some factor, as the corresponding moments of some variables $a$; this is the
characterizing property of compound Poisson variables.

If we put now $$\ee_k:=(\underbrace{1,*,1,*,\dots}_{k}),$$ then we have for our
asymptotic traces the decomposition
\begin{equation}\label{eq:v-k-decomp}
v_k=\sum_{l\mid k} Z(\tau_l^k,\ee_k)=\sum_{l\vert k} \sum_{t=1}^{l}
\Cp(\tau_l^k,\ee_k^{(t)}).
\end{equation}
Thus we have written the $v_k$ as a sum of compound Poisson variables. These are
independent for different $l$; however, for fixed $l$, the relation between the $\Cp$ for
various $k$ is more complicated, according to the $\ee$-strings. For $k$ even this
reduces again essentially to a sum of independent Poisson variables, whereas for $k$ odd
the situation is getting more involved. As we do not see a nice more explicit
description, we refrain from working out the details in this case.

\section{Concluding remarks}

We have seen in this paper that the original philosophy suggested in \cite{bsp}, namely
the fact that ``any result which holds for $S_n,O_n$ should have an extension to easy
quantum groups'', has indeed a first illustration in the context of the fundamental
stochastic eigenvalue computations of Diaconis and Shahshahani in \cite{dsh}.

A first natural question is about the possible ``eigenvalue'' interpretations of our
results. The point is that in the classical case the law of ${\rm Tr}(u^k)$ is of course
a function of the eigenvalues of the random matrix $u\in G$, but in the quantum case such
a simple interpretation is lacking. One technical problem is for instance the fact that
the variables ${\rm Tr}(u^k)$ are not self-adjoint in the quantum case. So, as a first
conclusion of our study, we would like to point out the fact that the following question
is still open: \emph{What are the eigenvalues of a random quantum group matrix?}

\begin{remark}
The half-liberated cases are, as we have seen in the last section, quite close to the
classical world, and the representation in terms of $2\times 2$-matrices over classical
unitary groups suggests an answer to the above question in this case: If $\lambda_i$ are
the eigenvalues of $U_n$, then
$$\begin{pmatrix}
0 &\lambda_i\\
\bar \lambda_i &0
\end{pmatrix}
$$ should be the corresponding eigenvalues of $O_n^*$. The same for $H^{s}$ and
$H^{(s)}$.

For the free quantum groups, however, the situation is less clear and we have no
suggestion for a possible candidate for eigenvalues.
\end{remark}

A second natural question is about what happens in the unitary case. It is known indeed
since \cite{ban} that the fundamental character $\chi={\rm Tr}(u)$ is asymptotically
circular in the sense of Voiculescu \cite{vdn} for the quantum group $U_n^+$, so the
results in \cite{dsh} about $U_n$ should probably have some kind of ``free version''.
However, the general study and classification of the easy quantum groups in the unitary
case seems to be a quite difficult combinatorial problem, and we do not have so far
concrete results in this direction. We would like to refer here to the concluding section
in our previous paper \cite{bsp}, which contains a brief description of the whole
problematics in the unitary case.

A third question concerns the relationship of the present results with the Bercovici-Pata
bijection \cite{bpa}. This bijection makes a correspondence between classical measures
and their ``free versions'', and a key problem in free probability is to find concrete
models for it. A general random matrix model, providing a full answer to the question,
comes from the work of Benaych-Georges \cite{ben} and Cabanal-Duvillard \cite{cab}. As
for the representation theory implementations, meant to be ``finer'', these concern so
far the laws of truncated characters for $O_n,S_n,H_n,B_n$, as explained in \cite{bsp}.
So, the question that we would like to raise here is as follows: is it possible to unify
the truncated character computations in \cite{bsp} with the Diaconis-Shahshahani type
computations from the present paper? This is definitely possible in the classical case,
where several ``truncation'' procedures are available in the general context of
stochastic eigenvalue analysis. See e.g. \cite{nov}
 .

A fourth fundamental problem is about what happens when $n$ is fixed. In the classical
case the subject is of course quite technical, but the results in this sense abound. In
the quantum case the situation is  definitely more complicated, because the main tool
that we have so far, namely the Weingarten formula, cannot properly handle the problem.
In the $O_n^+$ case the law of $u_{11}$, which can be regarded as a truncated character,
was computed only recently, in \cite{bcz}. We do not know if the techniques developed
there, which are new, can be applied to the variables investigated in the present paper.

Finally, we have the general question of trying to apply our ``$S_n,O_n$ philosophy'' to
some new, totally different situations. The first thought here goes to the various de
Finetti type theorems, available for $S_n,O_n$ and other classical groups from
\cite{kal}, and for $S_n^+,O_n^+$ from \cite{ksp} and \cite{cu1}, \cite{cu2}. A global
approach to the problem, by using easy quantum groups, is developed in our paper
\cite{bcs2}.

\end{document}